\documentclass[12pt,leqno]{amsart}
\usepackage{fourier}
\usepackage{amsmath,mathtools, amssymb, amsfonts,mathrsfs,graphicx,amsmath,bbold}
\usepackage[mathscr]{eucal}
\usepackage[backgroundcolor=yellow,shadow,colorinlistoftodos]{todonotes}
\usepackage[hypertex]{hyperref}

\textheight9in  \def\DATE{\today}
\textwidth6.5in \hoffset-1.35cm
\voffset-1cm
\hoffset - 2cm \markright{{\rm [\DATE]}} \hbadness=100000
\parskip3pt
\parindent10pt
\footnotesep2mm
\overfullrule3pt

\usepackage[all]{xy}

\makeatletter
\providecommand\@dotsep{5}
\def\listtodoname{List of Todos}
\def\listoftodos{\@starttoc{tdo}\listtodoname}
\makeatother

\vbadness100000
\hbadness=100000
\catcode`\@=11 
\def\@evenfoot{\rule{0pt}{20pt}[\today] \hfill [{\tt \jobname.tex}]}
\def\@oddfoot{\rule{0pt}{20pt}{[\tt \jobname.tex}]\hfill [\today]}
\catcode`\@=13
\textheight9in
\textwidth6.5in
\hoffset-2.35cm
\voffset-1cm
\parskip 5pt

\theoremstyle{plain}
\newtheorem{theorem}{Theorem}
\newtheorem*{theorem*}{Theorem}

\newtheorem{proposition}[theorem]{Proposition}
\newtheorem{lemma}[theorem]{Lemma}

\newtheorem*{lemma*}{Lemma}
\newtheorem{corollary}[theorem]{Corollary}

\newtheorem*{cor*}{Corollary}

\theoremstyle{definition}
\newtheorem{definition}[theorem]{Definition}

\newtheorem*{ass*}{Assumption}
\newtheorem*{conventions*}{Conventions}
\newtheorem*{acknowledgments*}{Acknowledgment}

\newtheorem*{notation*}{Notation}
\newtheorem{remark}[theorem]{Remark}

\theoremstyle{plain}

\newtheorem*{question*}{Question}


\def\Ass{\EuScript{A}{\it ss}}
\def\Se{{\!\!\tt St}}
\def\oPuS{\oP_{\underline S}}
\def\oPS{\oP_S}
\def\minimS{\minim_S}
\def\minimuS{\minim_{\underline S}}
\def\minimiso{\minim_{{\EuScript I}so}}
\def\End{\hbox{${\mathcal E}\hskip -.1em {\it nd}$}}
\def\sipka{{\B \to \W}}
\def\calF{{\mathcal F}}
\def\minim{{\mathfrak M}}
\def\malesipky{{%
{
\unitlength=.15pt
\B \hskip 2pt
\begin{picture}(80.00,40.00)(0.00,0.00)
\put(80.00,10.00){\vector(2,1){0}}
\put(80.00,30.00){\vector(2,-1){0}}
\put(36.00,20.00){\makebox(0.00,0.00){\line(0,1){20}}}
\put(44.00,20.00){\makebox(0.00,0.00){\line(0,1){20}}}
\bezier{50}(0.00,10.00)(40.00,-10.00)(80.00,10.00)
\bezier{50}(0.00,30.00)(40.00,50.00)(80.00,30.00)
\end{picture}}
\hskip 2pt \W
}}

\def\bbZ{{\mathbb Z}}
\def\bbk{{\mathbb k}}

\def\susp#1{\uparrow \hskip -.3em   #1}
\def\antishriek{{\scriptsize \raisebox{.2em} {!`}}}
\def\CCn#1#2{{C^{#1}_{\sscooP}}(#2)}
\def\CC#1{{C^c_{\sscooP}}(#1)}
\def\otexp#1#2{{#1^{\otimes #2}}}
\def\cooP{\raisebox{.8em}{\rotatebox{180}{$\EuScript P$}}}
\def\cogamma{\raisebox{.45em}{\rotatebox{180}{$\gamma$}}}
\def\sscooP{\raisebox{.7em}{\rotatebox{180}{\scriptsize $\EuScript P$}}}
\def\ssscooP{\raisebox{.5em}{\rotatebox{180}{\tiny $\EuScript P$}}}
\def\oP{{\EuScript P}}
\def\oA{{\EuScript A}}
\def\Rada#1#2#3{#1_{#2},\dots,#1_{#3}}
\def\Tc#1{\hbox{{$T^c(\uparrow \hskip -.3em   #1)$}}}
\def\E{{\tt E}}
\def\B{{\tt B}}
\def\W{{\tt W}}
\def\bfF{{F}}

\def\asim{{\ \stackrel{\scriptscriptstyle \infty}\thicksim \ }}
\def\ot{\otimes}
\def\Set{{\tt Set}}

\def\ttE{{\tt E}}
\def\iA{{\mathbb A}}\def\iB{{\mathbb B}}\def\iC{{\mathbb C}}
\def\ov#1{\redukcem{\overset{\lcirc}{#1}}}
\def\lcirc{{\raisebox{-.2em}{\scriptsize $\circ$}}}
\def\map#1{(#1,#1_2,#1_3,\ldots)}
\def\Umap#1{#1,#1_2,#1_3,\ldots}
\def\Ainfty{\hbox{${A_\infty}$}}
\def\CHE{{\tt CHE}}
\def\Iso{{\tt Isot}_\infty}
\def\id{{\rm id}}
\def\id{\mathbb 1}
\def\pa{\partial}

\def\bfmu{{\boldsymbol \mu}}

\def\bfomega{{\boldsymbol \omega}}
\def\bfnu{{\boldsymbol \nu}}
\def\redukce#1{\vbox to .3em{\vss\hbox{#1}}}
\def\redukcem#1{\vbox to .3em{\vss\hbox{$#1$}}}
\def\cP{\oP}

\begin{document}

\title[Transfers as bifibrations]{Transfers of $A_\infty$- and other
  homotopy structures as Grothendieck bifibrations}

\author{Martin Markl}
\address{The Czech Academy of Sciences, Institute of Mathematics, {\v Z}itn{\'a} 25,
         115 67 Prague, The Czech Republic}
\email{markl@math.cas.cz}
\thanks{Supported by Praemium Academi\ae\ and RVO: 67985840.}

\subjclass[2000]{16E99, 55S20}
\keywords{$A_\infty$-algebra, transfer, isotopy, bifibration}

\begin{abstract}
We show that the functor which assigns to an \Ainfty-morphism between isotopy
classes of \Ainfty-algebras whose linear part is a chain homotopy
equivalence its underlying chain map
is a~discrete Grothendieck bifibration.   We
then generalize our results to $\cP_\infty$-structures over a field
of characteristic zero, for any quadratic Koszul operad $\cP$. An
immediate application is a categorical framework in which the transfers of
e.g.~$A_\infty$-, $L_\infty$- and $C_\infty$-structures are strictly
  functorial.
A by-product of our reasoning is a general transfer theorem for
$\oP_\infty$ algebras, which we prove in the last section.
\end{abstract}

\baselineskip 15pt plus 1pt minus 1pt
\maketitle


\setcounter{tocdepth}{1}
\tableofcontents

\section*{Introduction}

The central theme of the joint paper~\cite{sull} of C.~Rogers and the author
was the question of how, given 
an \Ainfty-algebra $ (A,\pa,\mu_2,\mu_3,\ldots)$ and an
\Ainfty-morphism 
\[
F = (f,f_2,\ldots) :  (A,\pa,\mu_2,\mu_3,\ldots) \longrightarrow
(B,\pa,\nu_2,\nu_3,\ldots),
\] 
the $A_\infty$-structure $\bfnu = \nu_2,\nu_3,\ldots$ on the chain
complex $(B,\pa)$ depended
on the linear part $f$ of $F$. In loc.~cit.\ we 
called the \Ainfty-structure $\bfnu$ above a {\em transfer of\/} $\bfmu =
\mu_2,\mu_3,\ldots$  {\em  over the chain map} $f$.

There are many situations where a transfer exists. This
happens, e.g.,~when $f$ admits a left homotopy inverse~\cite{tr}. 
If $f$ induces an isomorphism of homology, and 
suitable freeness or projectivity 
assumptions are satisfied, the transferred structure
can be constructed inductively via homological obstruction theory, as was
done in the seminal paper by T.~Kadeishvili~\cite{kadeishvili:RMS80}.
A thorough historical account of the later approach can be found in~\cite{pet}.

By~\cite[Theorem~2]{sull}, the transfers over a chain map which
is a chain homotopy
equivalence are {\em unique up to an
isotopy\/} which is, by definition, an \Ainfty-automorphism whose
linear part is the identity map. This implies the following
`functoriality up to isotopy:'  
Consider the composite  $h := gf : (A,\pa) \to (C,\pa)$ of  
chain homotopy equivalences
$f: (A,\pa) \to (B,\pa)$ and $g: (B,\pa) \to (C,\pa)$. 
Let $\bfnu$ be a transfer of a given \Ainfty-structure $\bfmu$
on $(A,\pa)$ over~$f$, $\bfomega'$ a transfer of $\bfnu$ over
$g$, and $\bfomega''$  a transfer of $\bfmu$ over the
composite $h$. Then the \Ainfty-algebras $(C,\pa,\bfomega')$ and 
 $(C,\pa,\bfomega'')$ are isotopic. In fact, even a somewhat stronger
 result formulated in~Corollary~\ref{Dnes jsme byli s Jarkou na CEZu.}
below holds.

This raises the natural question of whether there is an appropriate
category of isotopy
classes of $A_\infty$-algebras such that the transfers taking
values in this hypothetical category  
will be {\em strictly\/} functorial. 
In the present note we show that a suitable category of isotopy
classes, denoted $\Iso$, indeed exists. 
Its existence will follow from Proposition~\ref{Pojedu
  tento tyden do Mercina?}, which is the main technical and
surprisingly profound result of this
note. 
Proposition~\ref{Pojedu tento tyden do Mercina?} uses
Lemma~\ref{Vcera jsem ten kopec na Tocnou vyjel.}. Its conclusion 
could be anticipated, but we found a surprisingly easy and
somewhat explicit proof. That is why we have put it in a separate
section with a pompous name.

The category $\Iso$ admits a forgetful functor $\Box : \Iso \to \CHE$
to the category of chain complexes and their chain homotopy
equivalences. The above-mentioned functoriality of transfers is expressed by
the fact that  $\Box : \Iso \to \CHE$ is a discrete Grothendieck
bifibration, i.e.\ a functor which is both a discrete fibration and
opfibration. It is well-known that there is a one-to-one
correspondence between discrete (op)fibrations and functors
from the base category to the category of sets. The `total space' of
a given (op)fibration then appears as the category of elements of the
corresponding functor. This description will be the subject of the
last part of this note.

\vskip .5em
\noindent 
{\bf Acknowledgment:} 
I would like to thank Chris Rogers for his kind and useful comments on
the first version of this note, namely on the content of
Lemma~\ref{Vcera jsem ten kopec na Tocnou vyjel.}.  I am also indebted 
to Jos\'e Manuel Moreno for  suggestions which  
led to a substantial improvement in the final version.

\vskip .5em
\noindent 
{\bf Conventions.} 
All algebraic objects in this note will be defined over an arbitrary
unital commutative ring $R$. In particular, they may live in the
category of abelian groups. An exception is
Section~\ref{Medvidek s Micinkou} where we need to assume at some
places that $R$ is a field of characteristic $0$. 
By $\ot$ we denote the tensor product
over $R$ and by $\id_X$  the identity
automorphism of an $R$-module $X$. 
Quasi-isomorphism will be abbreviated to quism.

For \Ainfty-algebras, their
morphisms and homotopies between morphisms we use the degree and 
sign conventions specified
in~\cite[page 141]{tr}. Namely, all differentials will be of degree
$-1$. Chain complexes, possibly unbounded, will be typically denoted by 
$(A,\pa)$, $(B,\pa)$, \&c. We will use the
same symbol $\pa$ for all  differentials, since they will always be
determined by their underlying spaces. An \hbox{$A_\infty$-algebra} $A =
(A,\pa,\mu_2,\mu_3,\ldots)$  will be abbreviated by $A = (A,\pa,\bfmu)$,
with  $\bfmu$ the collective symbol for the  higher products $\mu_k:
A^{\ot k } \to A$, $k \geq 2$.
An \Ainfty-morphism 
\begin{equation}
\label{Pozitri na kontrolu.}
\bfF = (f,f_2,f_3,\ldots) : (A,\pa,\mu_2,\mu_3,\ldots) \longrightarrow
(B,\pa,\nu_2,\nu_3,\ldots),\ f : A \to B,\
f_k:A^{\ot k} \to B,\ k \geq 2,
\end{equation}
will be abbreviated accordingly by $F :  (A,\pa,\bfmu) \to
(B,\pa,\bfnu)$. Such a morphism $F$ will be 
called an {\em isotopy\/} if its linear part
$F_1 := f$ equals  the identity automorphism $\id_A$ of $A$. 
Then of course $(B,\pa) = (A,\pa)$. 
The \Ainfty-morphism in~(\ref{Pozitri na kontrolu.}) 
is an {\/\em extension\/} of the chain map $f: (A,\pa) \to (B,\pa)$.

\vskip .5em

\noindent 
{\bf A reminder on transfers.}
The following main result of~\cite{tr} and its simple corollary  will
be frequently used in this note .

\begin{theorem}
\label{Stal se ten zazrak?}
Let $f : (A,\pa) \to (B,\pa)$ be a chain map, $g : (B,\pa) \to
(A,\pa)$ its left chain homotopy inverse, and $h$ a chain homotopy
between $gf$ and the identity automorphism of $A$, i.e.\ there exists a diagram
\[
\xymatrix@C=5em{
 *{\quad \quad  (A,\pa) \ } \ar@(ul,dl)[]_{h}}
\hskip -3.8em
\xymatrix@C=5em{
\rule{3em}{0em}\rule{0em}{2em}  \ar@<0.2em>@/^.8em/[r]^f
& {(B,\pa)\, ,} \ar@<0.2ex>@/^.8em/[l]_g 
}\
gf - \id_A = \pa h + h \pa. 
\]
Then any \Ainfty-algebra structure $(A,\pa,\bfnu)$ on $(A,\pa)$ induces
an \Ainfty-algebra structure $(B,\pa,\bfnu)$ on
$(B,\pa)$ such that the chain maps $f$ and $g$ extend to
\Ainfty-morphisms $F$ and~$G$, and the chain homotopy $h$ extends to
an \Ainfty-homotopy $H$ between $GF$ and $\id_A$, so we have the
diagram
\[
\xymatrix@C=5em{
 *{\quad \quad \quad (A,\pa,\bfmu) \hskip .3em} \ar@(ul,dl)[]_{H}}
\hskip -3.8em
\xymatrix@C=5em{
\rule{3.3em}{0em}\rule{0em}{2.2em}  \ar@<0.2em>@/^.7em/[r]^F
& {(B,\pa,\bfnu)\, .} \ar@<0.2ex>@/^.7em/[l]_G 
}
\]
\end{theorem}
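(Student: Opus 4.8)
The plan is to pass to cofree conilpotent coalgebras and invoke a perturbation lemma. Under the standard dictionary, an \Ainfty-structure $\bfmu$ on $(A,\pa)$ is a degree~$-1$ codifferential $D_A = d + \mu$ on $\Tc A$, where $d$ is the coderivation lifting the internal differential $\pa$ and $\mu$ the coderivation encoding $\mu_2,\mu_3,\dots$; an \Ainfty-morphism $F$ is a morphism of dg coalgebras $\Phi_F\colon(\Tc A,D_A)\to(\Tc B,D_B)$, composition of \Ainfty-morphisms corresponding to composition of coalgebra maps; and an \Ainfty-homotopy between $GF$ and $\id_A$ is a $(\Phi_G\Phi_F,\id)$-coderivation $\mathcal H$ with $\Phi_G\Phi_F-\id_{\Tc A}=D_A\mathcal H+\mathcal H D_A$. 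In this language the statement asserts the existence of a codifferential $D_B=d+\nu$ on $\Tc B$ (the transferred $\bfnu$), of coalgebra maps $\Phi_F,\Phi_G$ with linear parts $f$ and $g$, and of such an $\mathcal H$ with linear part $h$.

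First I would lift the linear data to the coalgebras. The coalgebra maps $\Tc f,\Tc g$ induced by $f,g$ satisfy $\Tc g\circ\Tc f=\Tc{(gf)}$, and the \emph{tensor trick} turns $h$ into an explicit $(\Tc{(gf)},\id)$-coderivation homotopy, namely the sum over $n$ of $\sum_{i+j=n-1}(\pm)\,(gf)^{\ot i}\ot h\ot\id^{\ot j}$ on $(\susp A)^{\ot n}$, for which $\Tc g\circ\Tc f-\id_{\Tc A}=dH+Hd$. Verifying that this $H$ is compatible with the comultiplications and that the signs of \cite{tr} make the telescoping work is routine but must be carried out.

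Next, regard $\mu$ as a perturbation of $d$ on $\Tc A$. Since $\mu$ has no arity-one component it strictly raises the tensor-length filtration, so applied to any fixed element the geometric expressions $\sum_k H(\mu H)^k$, $\sum_k(\mu H)^k$, \&c.\ are finite sums and hence converge. Feeding the data of the previous step together with the perturbation $\mu$ into the perturbation lemma produces a perturbed codifferential $D_B=d+\nu$ on $\Tc B$, perturbed coalgebra maps $\Phi_F,\Phi_G$, and a perturbed homotopy $\mathcal H$; the zeroth-order terms of these series are exactly $\Tc f$, $\Tc g$ and $H$, so $F$, $G$, $H$ extend $f$, $g$, $h$. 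Because the perturbation lemma preserves the relevant identities, one obtains simultaneously $D_B^2=0$ (the \Ainfty-relations for $\bfnu$), that $\Phi_F,\Phi_G$ are dg coalgebra maps ($F,G$ are \Ainfty-morphisms extending $f,g$), and $\Phi_G\Phi_F-\id_{\Tc A}=D_A\mathcal H+\mathcal H D_A$ (so $H$ is an \Ainfty-homotopy between $GF$ and $\id_A$ extending $h$) --- precisely the asserted diagram.

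The main obstacle is that the given data is \emph{not} a strong deformation retract: we only know that $g$ is a \emph{left} homotopy inverse of $f$, there is no side condition such as $H^2=0$, $H\,\Tc f=0$ or $\Tc g\,H=0$, and, crucially, the theorem insists that the \emph{given} $f,g,h$, not convenient replacements of them, be the maps that extend. Hence the textbook form of the perturbation lemma does not apply verbatim; one must use a one-sided (``ideal'') version, keep careful track of which series actually terminate, and check that all the perturbed operators still respect conilpotency, so that they genuinely define \Ainfty-data rather than mere coalgebra gadgets. This bookkeeping is where the explicit argument of \cite{tr} does its work; a direct tree-summation construction of $\bfnu,F,G,H$ followed by a check of the \Ainfty-identities is an equivalent but more computational route to the same conclusion.
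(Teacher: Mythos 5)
Two preliminary remarks. The paper offers no proof of this statement at all: it is imported verbatim as the main theorem of \cite{tr}, and the only related argument in the text is the operadic one of the last section, where the $\oP_\infty$-generalization is obtained by lifting $\psi:\oPS\to\oPuS$ through the cellular resolution $\minimS$. Your homological-perturbation route is therefore different from both, and it is a natural plan; the problem is that the step you dismiss as bookkeeping is where the entire content of the theorem sits, so as written the argument reduces the theorem to itself.

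Concretely, write $\phi = \Tc{f}$, $\gamma = \Tc{g}$ for the induced coalgebra maps, $H$ for your tensor-trick homotopy with $\gamma\phi-\id = dH+Hd$, $t$ for the coderivation encoding $\bfmu$, and $\tau = t(\id-Ht)^{-1}$ (locally finite, as you correctly observe). Two separate things must then be established. First, that the identities you need --- $(d+\phi\tau\gamma)^2=0$, the chain-map property of $\phi+\phi\tau H$ and $\gamma+H\tau\gamma$, and the homotopy relation for $H+H\tau H$ --- survive under the single hypothesis $\gamma\phi-\id=dH+Hd$. This happens to be true (the hypothesis $\phi\gamma=\id$ enters the classical proof only in showing $\Phi_F\Phi_G=\id$, which you do not need), but it is not a citation to the perturbation lemma: every textbook form of that lemma assumes a retraction, so these computations must be redone from scratch. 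Second, and this is the genuine gap: when $\phi\gamma\neq\id$ the perturbation-lemma output is not $A_\infty$-data. Already the leading term of the transferred codifferential satisfies
\[
\Delta(\phi t\gamma)=(\phi t\gamma\ot\phi\gamma+\phi\gamma\ot\phi t\gamma)\Delta
\ \neq\ (\phi t\gamma\ot\id+\id\ot\phi t\gamma)\Delta ,
\]
so $\phi\tau\gamma$ is not a coderivation of $\Tc{B}$, and likewise $\phi+\phi\tau H$ need not be a coalgebra morphism. (The issue is not conilpotency, which is automatic, but the coderivation and morphism properties themselves.) One must therefore define $\bfnu$, $F$, $G$, $H$ by the corestrictions of these operators to the cogenerators --- equivalently by the tree formulas --- and then verify the \Ainfty-identities for those objects directly. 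That verification, carried out with only a one-sided homotopy inverse, is precisely what \cite{tr} does; your proposal explicitly defers to it at exactly this point, so it does not constitute an independent proof.
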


\begin{corollary}
\label{hruba verze}
Let $f : (A,\pa) \to (B,\pa)$ be a chain homotopy equivalence and
$(A,\pa,\bfmu)$ an \Ainfty-algebra structure on $(A,\pa)$. Then there
exists an \Ainfty-structure $(B,\pa,\bfnu)$ on $(B,\pa)$ and an
extension of $f$  to an
\Ainfty-morphism
$F :   (A,\pa,\bfmu) \longrightarrow  (B,\pa,\bfnu)$.
\end{corollary}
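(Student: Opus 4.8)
The plan is to obtain the statement as an immediate specialization of Theorem~\ref{Stal se ten zazrak?}. First I would observe that a chain homotopy equivalence $f : (A,\pa) \to (B,\pa)$ in particular admits a \emph{left} chain homotopy inverse: there is a chain map $g : (B,\pa) \to (A,\pa)$ together with a degree $+1$ map $h : A \to A$ satisfying $gf - \id_A = \pa h + h\pa$. Only this one-sided datum is needed; a right homotopy inverse of $f$ plays no role in the conclusion. This is exactly the input diagram required by Theorem~\ref{Stal se ten zazrak?}.

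Next I would feed the triple $(f,g,h)$, together with the given \Ainfty-structure $\bfmu$ on $(A,\pa)$, into Theorem~\ref{Stal se ten zazrak?}. Its conclusion furnishes an \Ainfty-structure $\bfnu$ on $(B,\pa)$ and an \Ainfty-morphism $F : (A,\pa,\bfmu) \to (B,\pa,\bfnu)$ whose linear part $F_1$ equals $f$, that is, an extension of $f$ in the sense of the Conventions, as well as an extension $G$ of $g$ and an \Ainfty-homotopy $H$ extending $h$. Retaining only $\bfnu$ and $F$, and discarding the auxiliary data $G$ and $H$, yields precisely the assertion of the corollary.

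There is essentially no obstacle here: the one point worth a line of verification is that ``chain homotopy equivalence'' indeed supplies the one-sided homotopy inverse demanded by Theorem~\ref{Stal se ten zazrak?}, which is immediate from the definitions. All the substance is carried by the transfer theorem recalled above; Corollary~\ref{hruba verze} merely records the weaker existence statement extracted from part of its output.
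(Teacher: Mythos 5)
Your proposal is correct and coincides with the paper's intended derivation: the corollary is stated as an immediate consequence of Theorem~\ref{Stal se ten zazrak?}, obtained exactly as you describe by extracting a left homotopy inverse $g$ and homotopy $h$ from the chain homotopy equivalence $f$, applying the transfer theorem, and discarding the auxiliary output $G$ and $H$. No further comment is needed.
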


\section*{Layout of the paper}

Sections~\ref{Krtek a Laurinka}--\ref{dva kasparci a certik Pik} are devoted
to $A_\infty$-algebras and do not assume any knowledge of operads.
Section~\ref{Medvidek s Micinkou} in which the results are generalized
to $\oP_\infty$-algebras requires knowledge of operad theory; we refer to the monograph
\cite{MSS} or a more 
recent~\cite{loday-vallette}. It also relies on the apparatus
of~\cite{ws01}. The range of applicability of the results
is summarized at the very end of the main text.

\section{Videte miraculum}
\label{Krtek a Laurinka}

All algebraic objects here will be defined over an arbitrary
commutative unital ring, in particular, over the ring $\bbZ$ of
integers.  The following lemma will be used in the
proof of Proposition~\ref{Pojedu tento tyden do Mercina?}.

\begin{lemma}
\label{Vcera jsem ten kopec na Tocnou vyjel.}
Let $F :(A',\pa,\bfmu') \to (A'',\pa,\bfmu'')$ be an
\Ainfty-morphism and $g:(A',\pa) \to (A'',\pa)$ a~chain map, chain
homotopic to the linear part $f$ of $F = \map f$. Then $g$ can be extended to an
\Ainfty-morphism
\[
G = \map g : (A',\pa,\mu_2',\mu_3',\ldots) \longrightarrow
(A'',\pa,\mu_2'',\mu_3'',\ldots) 
\] 
which is, moreover,  \Ainfty-homotopic to $F$. 
\end{lemma}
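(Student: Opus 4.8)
The plan is to reduce the statement to the transfer theorem (Theorem~\ref{Stal se ten zazrak?}) by a pullback-style construction. First I would record the hypothesis: we have $g \simeq f$ via a chain homotopy $s : (A',\pa) \to (A'',\pa)$ of degree $+1$, so that $g - f = \pa s + s \pa$. The idea is that any two chain-homotopic chain maps out of $(A',\pa)$ fit into a common homotopy-equivalence datum, and one may push the \Ainfty-structure through that datum.

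\medskip

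Concretely, I would consider the cylinder (or mapping-cylinder-type) construction on $(A',\pa)$: form a chain complex $\widehat A$ built from $A' \oplus A' \oplus A'[1]$ (two copies of $A'$ glued by a homotopy strand), so that both $\id_{A'}$ factor through $\widehat A$ up to homotopy, and equip $\widehat A$ with a transferred \Ainfty-structure via Corollary~\ref{hruba verze} applied to the evident inclusion $(A',\pa,\bfmu') \hookrightarrow \widehat A$. The two projections $\widehat A \to A'$ — one homotopic to the identity via the "$f$-end", one via the "$g$-end" — both extend to \Ainfty-morphisms by the transfer theorem, and the chosen homotopy $s$ is exactly what witnesses that these two projections agree up to homotopy on the nose. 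Composing $F$ with the appropriate \Ainfty-section and then with the "$g$-end" projection produces the desired $G = \map g$ with linear part exactly $g$, and the homotopies assemble (using that \Ainfty-homotopy is an equivalence relation, and that it is compatible with composition) into an \Ainfty-homotopy $G \simeq F$.

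\medskip

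An alternative, more hands-on route avoids the cylinder: work directly with obstruction theory in the bar construction. The \Ainfty-morphism $F$ corresponds to a dg coalgebra map $\bfF : B A' \to B A''$ (the cobar/bar differentials encoding $\bfmu', \bfmu''$), and $g$ extends to \emph{some} coalgebra map $G_0$ iff the primitive-level obstruction vanishes, which it does since $(A'',\pa)$ already carries $\bfmu''$; then one inductively modifies $G_0$ so that it becomes coalgebra-homotopic to $\bfF$, the obstruction at each arity lying in a complex that is acyclic because $g \simeq f$. I expect the homological-algebra bookkeeping of this second approach to be the main obstacle — the signs and the precise form of the obstruction cocycles are delicate — so I would favor the cylinder argument, where the hard work is outsourced to Theorem~\ref{Stal se ten zazrak?}. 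The remaining nontrivial point in the cylinder approach is checking that the transferred morphisms can be chosen with \emph{prescribed} linear part $g$ (not merely a linear part homotopic to $g$); this is handled by feeding the \emph{specific} homotopy data $(g, f, s)$ into the transfer machine rather than an abstract equivalence, so that the linear part of the output is literally $g$.
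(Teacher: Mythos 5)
There is a genuine gap in both routes you propose. The cylinder route is circular. Theorem~\ref{Stal se ten zazrak?} and Corollary~\ref{hruba verze} only produce \Ainfty-morphisms into a target structure that the transfer machine \emph{constructs}; here the target $(A'',\pa,\bfmu'')$ is \emph{prescribed}, and the whole point of the lemma is to build a morphism into that fixed structure with a fixed linear part $g$. Concretely: if you transfer $\bfmu'$ onto the cylinder $\widehat A$ along the inclusion $i_0$ at the $f$-end, you get an extension $I_0$ of $i_0$ and an extension $P$ of the projection $p$, but the composite you can actually form, $F\circ P\circ I_1$ or $F\circ P\circ I_0$, either requires an \Ainfty-extension $I_1$ of $i_1$ into the already-fixed cylinder structure --- which is an instance of the very lemma being proved --- or has linear part $f\circ p\circ i_0=f$, not $g$. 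The only way to land on $g$ on the nose is to use the chain map $\widehat A\to A''$ assembled from $(f,g,s)$, and promoting \emph{that} to an \Ainfty-morphism into the prescribed $(A'',\pa,\bfmu'')$ is again the original problem. Your closing remark that one should ``feed the specific homotopy data into the transfer machine'' does not resolve this, because the machine never accepts a prescribed target structure as input.

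Your second route is in the right setting (the bar construction / tensor coalgebra picture is exactly what the paper uses), but the mechanism you invoke is not available: the obstruction complexes you would need are \emph{not} acyclic in general (and the lemma is stated over an arbitrary commutative ring, so no characteristic-zero averaging is available either), and ``$g\simeq f$'' does not by itself kill arity-by-arity obstructions. What actually makes the argument work is that no obstruction theory is needed at all: writing $\phi$ for the coalgebra map encoding $F$, one extends the given chain homotopy $s$ to a coderivation homotopy $\eta$ with \emph{arbitrary} higher components and simply \emph{defines} $\psi:=\phi+\delta''\eta+\eta\delta'$; the component equations of this formula determine the $g_n$ inductively, and the identity
\begin{equation*}
\delta''\bigl(\phi+\delta''\eta+\eta\delta'\bigr)=\delta''\phi+\delta''\eta\delta'
=\phi\delta'+\delta''\eta\delta'=\bigl(\phi+\delta''\eta+\eta\delta'\bigr)\delta'
\end{equation*}
shows $\psi$ is automatically a dg coalgebra map, i.e.\ $G=\map g$ is an \Ainfty-morphism homotopic to $F$ via $\eta$. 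Without this explicit formula (or an equivalent replacement), your plan does not close.
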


A special case of the lemma above, valid over a field of
characteristic zero, was given in~\cite[Proposition~35]{haha} using
different methods. The existence of the \Ainfty-morphism $G$ was the
content of~\cite[Proposition~14]{sull}; the \Ainfty\ homotopy between
$G$ and $F$ was
implicitly constructed in its proof. Below we offer a surprisingly
simple, direct

\begin{proof}
It is well-known that \Ainfty-algebras are the same as
differentials on tensor coalgebras (i.e.~degree $-1$ coderivations
that square to zero), 
cf.~\cite[Example~II.3.90]{MSS}. This correspondence essentially uses the fact
that tensor coalgebras are cofree conilpotent coassociative
coalgebras. 
In this language, \Ainfty-morphisms are
coalgebra morphisms commuting with the differentials, and 
\Ainfty-homotopies  appear as
coderivation homotopies between \hbox{coalgebra morphisms}.

More specifically, the family $\pa,\mu_2',\mu_3',\ldots$ of
multilinear maps
determines a degree $-1$ coderivation
$\delta'$ on the tensor coalgebra
 $\Tc {A'}$ cogenerated by the suspension of $A'$. 
The \Ainfty-axioms for the higher products are 
equivalent to \hbox{$\delta'^2=0$}. 
Similarly, $\pa,\mu_2'',\mu_3'',\ldots$ 
determines  a coderivation~$\delta''$ on
$\Tc {A''}$ satisfying \hbox{$\delta''^2=0$}.  

The family  
$F =\map f$ induces  a coalgebra morphism  $\phi : \Tc {A'} \to \Tc
{A''}$,  and $F$ is an
\Ainfty-morphism if and only if 
$\delta'' \phi = \phi \delta'$. Analogically, a family
$G=\map g$ is an \Ainfty-morphism if and only if the induced
coalgebra morphism  $\psi : \Tc {A'} \to \Tc{A''}$ satisfies 
\hbox{$\delta'' \psi = \psi \delta'$}. 
An \Ainfty-homotopy between $F$ and $G$ is 
given by a family $H =\map h$ of linear~maps 
\[
h : A' \to A'',\ h_k : A'^{\ot k} \to A'',\ \deg(h) = 1,\ 
\deg (h_k) = k, \ \hbox { for } \  k \geq 2.
\]
Every such a family assembles to a coderivation homotopy $\eta :  \Tc {A'} \to
\Tc {A''}$ which is, by definition, a degree $+1$ linear map satisfying
\[
\Delta'' \circ \eta = (\phi \ot \eta) \circ \Delta' +  
(\eta \ot \psi) \circ \Delta',
\]
where $\Delta'$ resp.~$\Delta''$ are the coproducts of $\Tc {A'}$
resp.~$\Tc {A''}$. 
The family $H = \map h$ is then an \Ainfty-homotopy between $G$
and $F$ if and only if 
\begin{equation}
\label{Za chvili jdu s Janou do MATu.}
\psi - \phi=  \delta'' \eta + \eta  \delta'
\end{equation}
The above translation of the \Ainfty-notions to 
the language of differential graded coalgebras is summarized in the following table:
\begin{subequations}
\begin{align*}
\hbox{$(A',\pa,\mu'_2,\mu'_3,\ldots)$ is an \Ainfty-algebra} 
& \ \Longleftrightarrow \   \delta'^2=0,
\\
\hbox{$(A'',\pa,\mu''_2,\mu''_3,\ldots)$ is an \Ainfty-algebra} 
& \ \Longleftrightarrow \   \delta''^2=0,
\\
\hbox{$F = \map f$ is an \Ainfty-morphism} 
& \ \Longleftrightarrow \   \delta'' \phi=\phi \delta',
\\
\hbox{$G = \map g$ is an \Ainfty-morphism} 
& \ \Longleftrightarrow \   \delta'' \psi=\psi \delta', \ \hbox { and}
\\
\hbox {$H = \map h$ is an \Ainfty-homotopy between $G$ and $F$}
& \ \Longleftrightarrow \ \psi - \phi=  \delta'' \eta + \eta  \delta'.
\end{align*}
\end{subequations}
Let us return to equation~\eqref{Za chvili jdu s Janou do MATu.}. 
In terms of the families of (multi)linear maps
\[
\mu'_2,\mu'_3,\ldots,\   
\mu''_2,\mu''_3,\ldots,\ 
\Umap f,\ \Umap g  \hbox { and } \Umap h,
\]
it is equivalent to the following infinite system of equations:
\begin{gather*}
\tag{$H_n$}
g_n  =  \ \pa h_n -(-1)^n h_n \pa +f_n
+ \sum_A \pm
\mu''_k\big(g_{r_1} \ot
\cdots \ot
g_{r_{i-1}}  \ot  h_{r_i}  \ot f_{r_{i+1}} \ot \cdots \ot f_{r_k}\big)
\\
\nonumber 
+ \sum_B \pm
h_k (\id_{A'}^{ \ot i-1} \ot \mu'_l \ot \id_{A'}^{\ot k-i}),\
n \geq 1.
\end{gather*}
In the above display, 
\begin{eqnarray*}
A\hskip -2mm &:=& \hskip -2mm \{k,i,\Rada r1k\ |\   
       2 \leq k \leq n,\ 1 \leq i \leq k,\ r_1,\ldots, r_k \geq 1,\
r_1+ \cdots + r_k = n\},
\\
B \hskip -2mm &:=& \hskip -2mm \{k,l\ |\ k+l = n+1,\ k,l \geq 2,\ 
1 \leq i \leq k\},
\end{eqnarray*}
and $f_1:= f$, $g_1 := g$. We intentionally left some  signs out, 
since only the types of the terms will matter. 
A zealous reader might find the explicit signs in Section~2 of~\cite{tr}.

Let $h$ be a given homotopy between $g$ and $f$, and  $h_k :
A'^{\otimes k} \to A''$, $k \geq 2$, {\/\em arbitrary\/} linear maps
of degree $k$.
We will construct the \Ainfty-morphism $G = \map g$ inductively, 
component by component, such that the homotopy between
$G$ and $F$ will be $H = \map h$.
To help to understand our construction better, we write explicitly the initial
three instances of ($H_n$). The result, explicit signs included, reads
\begin{align*}
\tag{$H_1$}
g  =& \  \pa h + h\pa +  f, 
\\  
\tag{$H_2$}
g_2 = &\   \pa h_2 - h_2\pa  + f_2  - \mu''_2 (g \ot h) - \mu''_2(h\ot f) +
        h\mu'_2\, ,\  \hbox { and}
\\
\tag{$H_3$}
g_3 = &\ \pa h_3  + h_3\pa + f_3 +
\mu''_3(g^{\ot 2} \ot h) + \mu''_3(g \ot h \ot f) + \mu''_3(h \ot
                   f^{\ot 2} ) -\mu''_2(g_2 \ot h) - \mu''_2(h \ot f_2)
\\
   &  
+\mu''_2(g\ot h_2) + \mu''_2(h_2 \ot f)- h\mu'_3 - h_2(\mu'_2 \ot
     \id_{A'}) + h_2(\id_{A'}  \ot \mu'_2)\, .
\end{align*}

Equation ($H_1$) holds by the assumption of the lemma. All
terms at the right hand side of ($H_2$) are defined, so ($H_2$) determines $g_2$.
Having $g_2$, all terms in the right hand side of ($H_3$)
have been defined, so ($H_3$) determines $g_3$. The induction goes
ahead in the obvious way, producing a
family $G =\map g$ satisfying ($H_n$) for each $n$\,.
The extension of that family to a coalgebra
morphism $\psi:\Tc{A'} \to \Tc{A''}$ satisfies~(\ref{Za
  chvili jdu s Janou do MATu.}), because this is precisely what
equations ($H_n$)~mean.

It remains to show that the family
$G =\map g$ represents an \Ainfty-morphism. This is, by the above
observations, equivalent to $\delta'' \psi = \psi \delta'$. Here comes
the advertised miracle.
Rewriting~(\ref{Za chvili jdu s Janou do MATu.}) as
$\psi = \phi +\delta'' \eta + \eta \delta'$,
 $\delta'' \psi = \psi \delta'$ equivalent to 
\[
\delta''(\phi +\delta'' \eta + \eta \delta') = (\phi +\delta'' \eta + \eta \delta')\delta'
\]
which is the same as
\[
\delta''\phi +\delta''\delta'' \eta + \delta''\eta \delta' 
= \phi\delta' +\delta'' \eta\delta' + \eta \delta'\delta'
\]
which holds, since $\delta''\phi = \phi\delta'$.
\end{proof}

\begin{corollary}
\label{Bude dnes opravene kolo?}
Let $E = \map e : (X,\pa,\bfmu') \to (X,\pa,\bfmu'' )$ be an
\Ainfty-endomorphism whose linear part $e$ is chain homotopic to the identity
$\id_X : (X,\pa) \to (X,\pa)$. Then $E$  is \Ainfty-homotopic
to  an isotopy ${T}: (X,\pa,\bfmu') \to
(X,\pa,\bfmu'')$.
\end{corollary}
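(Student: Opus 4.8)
The plan is to deduce the statement directly from Lemma~\ref{Vcera jsem ten kopec na Tocnou vyjel.}, whose hypotheses are supplied almost verbatim by the data we are given. Concretely, I would take $A' := X$ and $A'' := X$, equipped with the $A_\infty$-structures $\bfmu'$ and $\bfmu''$ respectively, and set $F := E$, so that the linear part of $F$ is $f = e$. By assumption there is a chain homotopy between $e$ and $\id_X$, so the chain map $g := \id_X : (X,\pa) \to (X,\pa)$ is chain homotopic to $f$, which is precisely the input required by Lemma~\ref{Vcera jsem ten kopec na Tocnou vyjel.}.

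Applying the lemma, I obtain an $A_\infty$-morphism $T = (\id_X, t_2, t_3, \ldots) : (X,\pa,\bfmu') \to (X,\pa,\bfmu'')$ extending the chain map $\id_X$, which is moreover $A_\infty$-homotopic to $E$. Since the linear part of $T$ equals $\id_X$, the morphism $T$ is an isotopy by definition, and the corollary follows. In particular no inductive construction has to be redone here: all of it is hidden inside Lemma~\ref{Vcera jsem ten kopec na Tocnou vyjel.}.

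The only point that calls for a moment's care — and the closest thing to an obstacle — is to recall that the notion of isotopy used in this note does not require the source and target $A_\infty$-structures to agree; an isotopy is simply an $A_\infty$-morphism whose linear part is the identity. So there is no tension between $T$ being an isotopy and its carrying source structure $\bfmu'$ and target structure $\bfmu''$: Lemma~\ref{Vcera jsem ten kopec na Tocnou vyjel.} is tailored to produce exactly such a morphism.
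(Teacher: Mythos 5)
Your proposal is correct and is exactly the argument the paper intends: the corollary is stated as an immediate consequence of Lemma~\ref{Vcera jsem ten kopec na Tocnou vyjel.}, applied with $g := \id_X$, which is chain homotopic to the linear part $e$ of $E$ by hypothesis. The resulting extension $T$ of $\id_X$ is an isotopy by definition and is $A_\infty$-homotopic to $E$, as required.
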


\section{The category of isotopy classes}
\label{Kralicek a Oslicek}

In the following, we express with \hbox{$f'\! \sim \! f''$} that the chain map $f'$
is chain homotopic to $f''$. The notation 
$F' \stackrel{\scriptscriptstyle \infty}\thicksim F''$ means that
the $A_\infty$-morphisms $F'$ and $F''$ are  $A_\infty$-homotopic.
Here is the main technical result of the note:

\begin{proposition}
\label{Pojedu tento tyden do Mercina?} 
Let $F' : (A,\pa,\bfmu') \to  (B,\pa,\bfnu')$ and
$F'' : (A,\pa,\bfmu'') \to  (B,\pa,\bfnu'')$
be $\Ainfty$-morphisms with the underlying chain maps $f',
f'' : (A,\pa) \to  (B,\pa)$ which are chain homotopy equivalences.
Then the following are equivalent: 
\begin{itemize}
\item [(i)] 
$f' \! \sim \! f''$, and $(A,\pa,\bfmu')$ and $(A,\pa,\bfmu'')$ are
isotopic,
\item [(ii)] 
$f' \! \sim \! f''$, and $(B,\pa,\bfnu')$ and $(B,\pa,\bfnu'')$ are
isotopic,
\item [(iii)]
$f' \! \sim \! f''$, $(A,\pa,\bfmu')$ and $(A,\pa,\bfmu'')$ are
isotopic, and also  $(B,\pa,\bfnu')$ and $(B,\pa,\bfnu'')$ are isotopic,
\item [(iv)]
There are isotopies $S:(A,\pa,\bfmu') \to (A,\pa,\bfmu'')$ and 
$T:(B,\pa,\bfnu') \to (B,\pa,\bfnu'')$ such that the diagram
\begin{equation*}
\xymatrix@C=.5em@R=.5em{(A,\pa,\bfmu') \ar[rr]^{F'} \ar[dd]_{S}   
&&  (B,\pa,\bfnu') \ar[dd]^{T}
\\
&\asim&
\\
(A,\pa,\bfmu'')  \ar[rr]^{F''}   && (B,\pa,\bfnu'')
}
\end{equation*}
commutes up to $\Ainfty$-homotopy.
\end{itemize}
\end{proposition}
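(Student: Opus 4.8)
The plan is to prove the cycle of implications $\text{(iv)} \Rightarrow \text{(iii)} \Rightarrow \text{(ii)} \text{ and (i)}$, and then close the loop by showing $\text{(i)} \Rightarrow \text{(iv)}$ and $\text{(ii)} \Rightarrow \text{(iv)}$, which together with the obvious fact that (iii) implies both (i) and (ii) gives the full equivalence. The implications $\text{(iii)} \Rightarrow \text{(i)}$ and $\text{(iii)} \Rightarrow \text{(ii)}$ are trivial, and $\text{(iv)} \Rightarrow \text{(iii)}$ is immediate: the isotopies $S$ and $T$ promised in (iv) are precisely the isotopies witnessing that the source and target \Ainfty-structures are isotopic, and the existence of the \Ainfty-morphism $F'$ with linear part $f'$ together with the \Ainfty-homotopy $F'' S \asim T F'$ forces $f' \sim f''$ on linear parts (the linear part of an isotopy is the identity). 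So the real content is the two symmetric implications $\text{(i)} \Rightarrow \text{(iv)}$ and $\text{(ii)} \Rightarrow \text{(iv)}$; by the evident left--right symmetry of the statement (replace $f$ by a chain homotopy inverse, using Theorem~\ref{Stal se ten zazrak?}) it suffices to treat one of them, say $\text{(i)} \Rightarrow \text{(iv)}$.

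So assume (i): we are given an isotopy $S : (A,\pa,\bfmu') \to (A,\pa,\bfmu'')$ and the relation $f' \sim f''$. First I would form the \Ainfty-endomorphism $E := F'' \circ S \circ (F')^{-1}$ of $(B,\pa)$ --- here $(F')^{-1}$ makes sense as an \Ainfty-morphism because $f'$ is a chain homotopy equivalence, so by Theorem~\ref{Stal se ten zazrak?} (applied to obtain a two-sided \Ainfty-inverse up to homotopy) $F'$ is invertible in the homotopy category of \Ainfty-algebras; more carefully, I would work with a chosen \Ainfty-morphism $\overline{F'} : (B,\pa,\bfnu') \to (A,\pa,\bfmu')$ whose linear part is a homotopy inverse of $f'$ and with $\overline{F'} F' \asim \id$. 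The linear part of $E = F'' S \overline{F'}$ is $f'' \circ \id_A \circ \overline{f'}$, which is chain homotopic to $f'' \overline{f'} \sim f' \overline{f'} \sim \id_B$ using $f' \sim f''$. Thus $E$ is an \Ainfty-endomorphism of $(B,\pa)$ whose linear part is chain homotopic to $\id_B$, so Corollary~\ref{Bude dnes opravene kolo?} applies: $E$ is \Ainfty-homotopic to an isotopy $T : (B,\pa,\bfnu') \to (B,\pa,\bfnu'')$.

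It remains to check that this $S$ and $T$ make the square in (iv) commute up to \Ainfty-homotopy, i.e.\ $F'' S \asim T F'$. By construction $T \asim E = F'' S \overline{F'}$, so $T F' \asim F'' S \overline{F'} F' \asim F'' S$, using that $\overline{F'} F' \asim \id_A$ and that \Ainfty-homotopy is a congruence with respect to composition (homotopic \Ainfty-morphisms compose to homotopic ones --- this is standard, e.g.\ via the coalgebra/coderivation-homotopy picture recalled in the proof of Lemma~\ref{Vcera jsem ten kopec na Tocnou vyjel.}). This proves (iv), and hence the whole proposition once the symmetric argument gives $\text{(ii)} \Rightarrow \text{(iv)}$.

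The main obstacle I anticipate is bookkeeping around inverses and the congruence property of \Ainfty-homotopy: I need that $f'$ being a chain homotopy equivalence yields an \Ainfty-quasi-inverse $\overline{F'}$ with $\overline{F'}F' \asim \id$ (not merely a one-sided statement), and that $\asim$ is preserved under pre- and post-composition so that all the ``$\asim$'' manipulations above are legitimate. Both are essentially folklore and follow cleanly from the dg-coalgebra reformulation, but they should be stated explicitly — perhaps the cleanest route is to note that $F'$ induces an isomorphism in the homotopy category (invert it there) and then transport back, or alternatively to invoke Theorem~\ref{Stal se ten zazrak?} twice to build a genuine two-sided homotopy inverse. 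Everything else is a direct assembly of Corollary~\ref{Bude dnes opravene kolo?} with these formal properties, and the left--right symmetry lets us avoid doing the $\text{(ii)} \Rightarrow \text{(iv)}$ case from scratch.
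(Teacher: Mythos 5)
Your overall strategy is the same as the paper's: reduce (i)~$\Rightarrow$~(iv) to producing an isotopy on $B$ by applying Corollary~\ref{Bude dnes opravene kolo?} to a composite of $F''S$ with a homotopy inverse of $F'$, and then verify $F''S\asim TF'$ by the standard congruence properties of $\asim$ under composition. The one place where you assume more than the cited results deliver is the object $\overline{F'}:(B,\pa,\bfnu')\to(A,\pa,\bfmu')$ with $\overline{F'}F'\asim\id_A$. Theorem~\ref{Stal se ten zazrak?} does \emph{not} produce such a morphism: it produces a \emph{transferred} structure $(B,\pa,\overset{\lcirc}{\bfnu})$ on $B$ together with $\overset{\lcirc}{F}:(A,\pa,\bfmu')\to(B,\pa,\overset{\lcirc}{\bfnu})$ and $\overset{\lcirc}{G}:(B,\pa,\overset{\lcirc}{\bfnu})\to(A,\pa,\bfmu')$ with $\overset{\lcirc}{G}\overset{\lcirc}{F}\asim\id_A$; the source of the inverse is the transferred structure, not $(B,\pa,\bfnu')$. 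The paper works entirely with this intermediate object: it forms $F'\overset{\lcirc}{G}$ and $F''S\overset{\lcirc}{G}$, straightens both into isotopies $T'$, $T''$ via Corollary~\ref{Bude dnes opravene kolo?}, and sets $T:=T''T'^{-1}$. Your $\overline{F'}$ does exist (e.g.\ $\overline{F'}:=\overset{\lcirc}{G}\,T'^{-1}$ works, using $F'\asim T'\overset{\lcirc}{F}$), but establishing its existence is essentially the paper's argument in disguise, so invoking it as folklore short-circuits the very point that needs proof; either prove it as a lemma from Theorem~\ref{Stal se ten zazrak?} or route through the transferred structure as the paper does. A second, smaller caveat: your appeal to ``left--right symmetry'' to dispose of (ii)~$\Rightarrow$~(iv) again presupposes two-sided invertibility of $F'$ and $F''$ up to $\asim$ in order to translate the mirrored square back into one involving $F'$ and $F''$; the paper instead runs the mirror construction explicitly (transferring backwards along a homotopy inverse $g$ of $f'$ to an intermediate structure on $A$ and producing isotopies $S'$, $S''$ with $S:=S''^{-1}S'$). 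With the invertibility lemma made explicit, your argument is correct and arguably cleaner to state, at the cost of hiding the only genuinely nontrivial construction inside that lemma.
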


\begin{proof} 
Since $\hbox{(iv)} \Longrightarrow \hbox{(iii)} 
\Longrightarrow \hbox{(i)}\ \&\ \hbox{(ii)}$, it suffices to prove that
both (i) or (ii) imply~(iv).
Let us start with \hbox{(i) $\Longrightarrow$ (iv)}.
For this purpose, consider the diagram
\begin{equation}
\label{Musim se jit vecer probehnout.}
\xymatrix@C=4em{(A,\pa,\bfmu') \ar[rr]^{F'} \ar[dd]_{S}
\ar@{-->}@/^/[rrd]^(.65){\overset{\lcirc}{F}}
 &  &  (B,\pa,\bfnu') 
\\
&&  (B,\pa,\overset{\lcirc}{\bfnu}) \ar@{-->}@/^2em/[llu]^(.5){\overset{\lcirc}{G}}
\ar@{-->}[u]_{T'}\ar@{-->}[d]^{T''}
\\
(A,\pa,\bfmu'')  \ar[rr]^{F''}   && (B,\pa,\bfnu'')
}
\end{equation}
in which $S: (A,\pa,\bfmu') \to (A,\pa,\bfmu'')$ 
is an  isotopy. Theorem~5 of~\cite{tr}
guarantees the existence of \Ainfty-morphisms $\redukcem{\overset{\lcirc}{F}}$
and $\redukcem{\overset{\lcirc}{G}}$ such that  $\redukcem{\overset{\lcirc}{F}}$ extends
$f'$, the linear part  $\ov{g}$ of $\redukcem{\overset{\lcirc}{G}}$ 
is a chain homotopy inverse of $f'$, 
and  $\redukcem{\overset{\lcirc}{G}\overset{\lcirc}{F}} \asim  \id_A$.

Now $F' \redukcem{\overset{\lcirc}{G}} :  (B,\pa,\overset{\lcirc}{\bfnu}) \to
(B,\pa,\bfnu')$ is an \Ainfty-map with the linear part $f'\ov{g}$
chain homotopic to $\id_B$ thus, 
by Corollary~\ref{Bude dnes opravene kolo?} above, 
there exists an isotopy $T': (B,\pa,\redukcem{\overset{\lcirc}{\bfnu}}) \to
(B,\pa,\bfnu')$, \Ainfty-homotopic to
$F' \redukcem{\overset{\lcirc}{G}}$. Since $S$ is an isotopy, it does not affect
the linear parts of \Ainfty-maps, so the linear part of $F'' S \ov
G$ is~$f'' \ov g$, which is chain homotopic to
$f' \ov g \sim \id_B$ since $f' \sim f''$ by assumption. 
Corollary~\ref{Bude dnes opravene kolo?} thus gives  an
isotopy  $T'': (B,\pa,\overset{\lcirc}{\bfnu}) \to
(B,\pa,\bfnu'')$, \Ainfty-homotopic to
$F''  S\redukcem{\overset{\lcirc}{G}}$.  The above 
morphisms fulfill  the hypotheses of the obvious implications
\begin{subequations}
\begin{align}
\label{1}
F''S\overset{\lcirc}{G} \asim T''\ \& \   \   \overset{\lcirc}{G}
\overset{\lcirc}{F} \asim \id_A  \ &\Longrightarrow \ F''S \asim T'' \ov{F},
\  \hbox { and }
\\
\label{2}
F'\overset{\lcirc}{G} \asim T'\ \& \   \   \overset{\lcirc}{G}
\overset{\lcirc}{F} \asim \id_A  \ &\Longrightarrow \ F' \asim T'
\overset{\lcirc}{F}.
\end{align}
\end{subequations}
We claim that (iv) is satisfied with $T := T'' T'^{-1}$. Indeed, the
homotopy commutativity 
\[
F'' S \asim T''  T'^{-1} F' = TF'
\] 
of~(\ref{Musim se jit vecer probehnout.}) is equivalent to $T''
\ov F \asim T''  T'^{-1} F'$ by~(\ref{1}). Applying $T''^{-1}$ to both sides from the
left, we see that the latter is equivalent to $\ov F \asim  T'^{-1}
F'$, which is in turn equivalent to $T' \ov F \asim F'$, established
in~(\ref{2}).

The proof of \hbox{(ii) $\Longrightarrow$ (iv)} is similar, so we only
indicate the main steps.  Let $g$ be a homotopy inverse
of $f'$ which is, of course, simultaneously a homotopy inverse of
$f''$.  Theorem~5 of~\cite{tr} gives $A_\infty$-maps  $\overset{\lcirc}{G} :(B,\pa,\bfnu'')\to
(A,\pa,\overset{\lcirc}{\bfmu})$ and 
$\overset{\lcirc}{F} : (A,\pa,\overset{\lcirc}{\bfmu}) \to (B,\pa,\bfnu'')$
in the diagram
\[
\xymatrix@C=4em{(A,\pa,\bfmu') \ar[rr]^{F'} \ar@{-->}[d]_{S'}
&&  (B,\pa,\bfnu') \ar[dd]^T
\\
(A,\pa,\overset{\lcirc}{\bfmu}) \ar@{-->}@/_/[rrd]^(.5){\overset{\lcirc}{F}}&& 
\\
(A,\pa,\bfmu'')  \ar[rr]^{F''} \ar@{-->}[u]_{S''} 
&& (B,\pa,\bfnu'') \ar@{-->}@/_1.8em/[llu]_(.5){\overset{\lcirc}{G}}
}
\]
so that ${\overset{\lcirc}{F}}{\overset{\lcirc}{G}} \asim \id_B$.
The linear part  $g
f'$ of the $A_\infty$-map
${\overset{\lcirc}{G}} T {\overset{\lcirc}{F}}$ is homotopic to $\id_A$, 
so Corollary~\ref{Bude dnes opravene kolo?} gives  an
isotopy  $S': (A,\pa,\bfmu') \to
(A,\pa,\overset{\lcirc}{\bfmu})$, \Ainfty-homotopic to
$\redukcem{\overset{\lcirc}{G}}T F'$. Similarly we get an isotopy
$S'': (A,\pa,\bfmu'') \to (A,\pa,\overset{\lcirc}{\bfmu})$. Then
$S:= S''^{-1}S'$ is the isotopy required in~(iv).
\end{proof}

Proposition~\ref{Pojedu tento tyden do Mercina?} implies the
functoriality of transfers up to isotopy, formulated in

\begin{corollary}
\label{Dnes jsme byli s Jarkou na CEZu.}
Consider a homotopy commutative diagram of chain homotopy equivalences
\[
\xymatrix{
& (B,\pa) \ar@/^/[rd]^g&
\\
(A,\pa)   \ar@/^/[ru]^f   \ar[rr]^{h}  && (C,\pa)
}
\]
along with the data consisting of
\begin{itemize}
\item 
an \Ainfty-structure $(A,\pa,\bfmu)$ on $(A,\pa)$,
\item 
an extension $F = (f,f_2,f_3,\ldots) :
(A,\pa,\bfmu) \to (B,\pa,\bfnu)$ of $f$,
\item
an extension $G = (g,g_2,g_3,\ldots) :
(B,\pa,\bfnu) \to (C,\pa,\bfomega')$ of $g$ and
\item
an extension  $H = (h,h_2,h_3,\ldots) :  
(A,\pa,\bfmu) \to (C,\pa,\bfomega'')$ of the composite $h :=
gf$.
\end{itemize} 
Then there exists an isotopy $T: (C,\pa,\bfomega') \to
(C,\pa,\bfomega'')$ such that the diagram of \Ainfty-morphisms
\[
\xymatrix@R=.5em{
& (B,\pa,\bfnu) \ar[r]^G&(C,\pa,\bfomega') \ar[dd]^T
\\
&\asim&
\\
(A,\pa,\bfmu)   \ar@/^2em/[ruu]^F   \ar[rr]^{H}  && (C,\pa,\bfomega'')
}
\]
commutes up to \Ainfty-homotopy. 
\end{corollary}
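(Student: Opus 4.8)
The plan is to deduce Corollary~\ref{Dnes jsme byli s Jarkou na CEZu.} as an essentially immediate consequence of Proposition~\ref{Pojedu tento tyden do Mercina?}. I would apply the Proposition with our $(A,\pa)$ and $(C,\pa)$ in the roles of its two chain complexes, and with $F':=GF:(A,\pa,\bfmu)\to(C,\pa,\bfomega')$ and $F'':=H:(A,\pa,\bfmu)\to(C,\pa,\bfomega'')$. The underlying chain maps of $F'$ and $F''$ are $gf$ and $h$; both are chain homotopy equivalences (each of $f,g,h$ is, and a composite of chain homotopy equivalences is again one), and $gf\sim h$ by the assumed homotopy commutativity of the triangle. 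Since $F'$ and $F''$ carry the \emph{same} source $A_\infty$-algebra $(A,\pa,\bfmu)$, that source is isotopic to itself via $\id_A$, so hypothesis~(i) of the Proposition is satisfied.

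By the implication (i)$\Rightarrow$(iv) I then obtain isotopies $S:(A,\pa,\bfmu)\to(A,\pa,\bfmu)$ and $T:(C,\pa,\bfomega')\to(C,\pa,\bfomega'')$ with $T\circ GF\asim H\circ S$. The corollary asks for the sharper relation $T\circ GF\asim H$, i.e.\ with $S$ replaced by the identity, so the one place needing care is to arrange $S=\id_A$. For this I would not quote the \emph{statement} of~(iv) but re-enter its proof: in diagram~\eqref{Musim se jit vecer probehnout.} the isotopy $S$ is an \emph{arbitrarily chosen} witness that $\bfmu'$ and $\bfmu''$ are isotopic, and since here $\bfmu'=\bfmu''=\bfmu$ I may take $S:=\id_A$ from the outset. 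Then $\ov F$ and $\ov G$ are supplied by Theorem~5 of~\cite{tr} from $f'=gf$, the morphisms $GF\ov G$ and $H\ov G$ have linear parts chain homotopic to $\id_C$ (for the second one using $gf\sim h$), Corollary~\ref{Bude dnes opravene kolo?} replaces these by isotopies $T'$ and $T''$, and $T:=T''(T')^{-1}$ does the job: the two auxiliary implications of that proof specialize, with $S=\id_A$, to $GF\asim T'\ov F$ and $H\asim T''\ov F$, whence $T\circ GF=T''(T')^{-1}\circ GF\asim T''\ov F\asim H$, which is precisely the homotopy commutativity of the triangle in the corollary.

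The step needing the most attention --- though it is hardly a serious obstacle --- is the reduction to $S=\id_A$ just described: one must make sure the correcting isotopy on the $A$-side can be taken trivial, so that a single isotopy $T$ on the $C$-side suffices. Beyond that, all the real content sits in Proposition~\ref{Pojedu tento tyden do Mercina?} --- the main technical result of the note --- together with Lemma~\ref{Vcera jsem ten kopec na Tocnou vyjel.} and Corollary~\ref{Bude dnes opravene kolo?}; the remaining verifications (that $gf$ and $h$ are chain homotopy equivalences, and that one may compose and invert the isotopies involved, their linear parts being identities) are routine bookkeeping already present in the proof of the Proposition.
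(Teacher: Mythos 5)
Your proposal is correct and follows essentially the same route as the paper: the paper likewise feeds $F'=GF$ and $F''=H$ (with common source $(A,\pa,\bfmu)$, hence hypothesis (i) with the identity isotopy) into Proposition~\ref{Pojedu tento tyden do Mercina?} and observes that the proof of (i)$\Rightarrow$(iv) can be run with $S=\id_A$, yielding the single isotopy $T$ on the $C$-side. Your explicit unwinding of why $S$ may be taken to be the identity is exactly the ``easy to check'' step the paper leaves to the reader.
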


\begin{proof}
The assumptions of the corollary are summarized in the diagram 
\[
\xymatrix@R=1.5em{
(A,\pa,\bfmu) \ar@{=}[dd]  \ar[r]^F 
& (B,\pa,\bfnu) \ar[r]^G&(C,\pa,\bfomega')
\\
&&
\\
(A,\pa,\bfmu)    \ar[rr]^{H}  && (C,\pa,\bfomega'')
}
\]
in which the linear part of the composite $GF$ is homotopic to the
linear part of $H$. We are therefore in the situation of item (i) of
Proposition~\ref{Pojedu tento tyden do Mercina?}, with the isotopy
being the identity endomorphism of $(A,\pa,\bfmu)$. It is easy
to check that the proof of \hbox{(i) $\Longrightarrow$ (ii)}
in this particular case leads to a diagram in item (iv) with $S =\id_A$.
\end{proof}

Let $\Iso$ be the category whose objects are isotopy classes of
$\Ainfty$-algebras. The arrows (i.e.~morphisms) in  $\Iso$ are equivalence classes of
$\Ainfty$-morphisms $F : (A,\pa,\bfmu) \to  (B,\pa,\bfnu)$ whose
underlying chain maps $f$ are chain homotopy equivalences,
modulo the relation which identifies $F'$ with $F''$ if and only if one
(and hence all) of conditions (i)--(iv)  in
Proposition~\ref{Pojedu tento tyden do Mercina?}  is satisfied. The
composition is defined as follows.

Assume that $\iA$, $\iB$ and $\iC$ are isotopy classes of
\Ainfty-algebras and $\phi : \iA \to \iB$, resp.~$\psi : \iB \to \iC$ two
arrows in $\Iso$. Assume that $\phi$ is the equivalence class of
some $F : (A,\pa,\bfmu) \to  (B,\pa,\bfnu')$ and $\psi$ the class
of $Y : (B,\pa,\bfnu'') \to  (C,\pa,\bfomega)$. Since 
the objects of $\Iso$ are isotopy classes, there exists
an isotopy $S: (B,\pa,\bfnu') \to  (B,\pa,\bfnu'')$. We define the
composite $\psi\phi : \iA \to \iC$ as the equivalence class of the
composed \Ainfty-morphism
\[
\xymatrix@R=2em{ (A,\pa,\bfmu) \ar[r]^F &  (B,\pa,\bfnu') \ar[d]^S
\\
&  (B,\pa,\bfnu'') \ar[r]^Y  &(C,\pa,\bfomega).
}
\]   
Let us prove that the equivalence class of $Y S F: (A,\pa,\bfmu) \to (C,\pa,\bfomega)$ does not depend on~$S$. Consider the diagram
\[
\xymatrix@R=1.7em{ 
&  (B,\pa,\bfnu'') \ar[rr]^Y  &&(C,\pa,\bfomega) \ar@{-->}[dd]^T
\\
(A,\pa,\bfmu) \ar[r]^F &  (B,\pa,\bfnu') \ar[d]^{S'}  \ar[u]_{S''} &\asim&
\\
&  (B,\pa,\bfnu'') \ar[rr]^Y  &&(C,\pa,\bfomega)
}
\]
in which $S'$ and $S''$ are two such isotopies. Then $S'{S''}^{-1}
:(B,\pa,\bfnu') \to  (B,\pa,\bfnu'')$ is
an isotopy too, thus Proposition~\ref{Pojedu tento tyden do Mercina?}
gives an isotopy $T : (C,\pa,\bfomega) \to (C,\pa,\bfomega)$ making
the square in the above diagram \Ainfty-homotopy commutative. Thus also the
square
\[
\xymatrix@C=.5em@R=.5em{(A,\pa,\bfmu) \ar[rr]^{YS''F} \ar@{=}[dd]
&&  (C,\pa,\omega) \ar[dd]^{T}
\\
&\asim
\\
(A,\pa,\bfmu)  \ar[rr]^{YS'F}   && (B,\pa,\bfomega)
}
\] 
is \Ainfty-homotopy commutative, so $YS'F$ and $YS''F$ belong to the
same equivalence class. We leave the analogous verification that the composite
$\psi\phi$ does not depend on the choices of the representatives of
$\phi$ and $\psi$ as an exercise.

\section{Transfers as bifibrations}
\label{dva kasparci a certik Pik}

In this section we study the functor that assigns to an isotopy
class of an \Ainfty-algebra its underlying chain complex. Let us
recall the following classical

\begin{definition}
\label{svitici snehulacek}
A functor $p: \E \to \B$ is a {\/\em discrete\/}
(Grothendieck) {\/\em fibration\/} if 
for each object $e''$ of the category $\E$ and each
morphism  $\beta : b'
\to p(e'')$ in $\B$ there exists a~unique
morphism $\varepsilon : e' \to e''$ in~$\E$ such that 
$p(\varepsilon) = \beta$. 

A functor is a {\/\em discrete opfibration\/} if the induced functor
between the opposite categories is a discrete fibration. Finally, a
functor is a {\/\em discrete bifibration\/} if it is both a discrete
fibration and opfibration. The lifting property 
is captured in:
\begin{equation*}
\xymatrix@C=1em@R=1em{e' \ar@{|->}[dd]_p  \ar@{-->}[rr]^\varepsilon &&  
e'' \ar@{|->}[dd]^p
\\
&\hbox {\tt fibration}
\\
b'   \ar[rr]^\beta    & & p(e'')
}
\hskip 2em
\xymatrix@C=.8em@R=1em{e' \ar@{|->}[dd]_p  \ar@{-->}[rr]^\varepsilon &&  
e'' \ar@{|->}[dd]^p
\\
&\hbox {\tt opfibration}
\\ 
p(e')   \ar[rr]^\beta    & & b''
}
\end{equation*}
\end{definition}

The adjective ``discrete'' in the definition means that the lifts are
unique. The general definition requires instead only a certain
universal property.

Denote by $\CHE$ the category whose objects are chain complexes and
morphisms are homotopy classes of chain maps which have
(unspecified) chain homotopy inverses. 
Notice that isotopic \Ainfty-algebras have
the same underling chain complex. Moreover, the underlying chain maps  
of all representatives of an
arrow in $\Iso$ are chain homotopic to each other. One therefore
has an obvious forgetful functor $\Box : \Iso \to \CHE$.
The main result of this section is

\begin{theorem}
\label{Uz druhy vikend prsi.}
The functor \/ $\Box : \Iso \to \CHE$ is a
surjective-on-objects discrete
Grothendieck bifibration.   
\end{theorem}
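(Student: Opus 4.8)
The plan is to establish the three defining properties in turn, with the substance carried entirely by Proposition~\ref{Pojedu tento tyden do Mercina?} and the transfer results of~\cite{tr}. Surjectivity on objects is immediate: the trivial \Ainfty-structure (all higher products equal to zero) on a chain complex $(A,\pa)$ has isotopy class lying over $(A,\pa)$. I would then prove directly that $\Box$ is a discrete opfibration. From this the remaining assertions follow formally: $\CHE$ is a groupoid (every chain homotopy equivalence is invertible there), and lifting the inverses of base arrows through the opfibration $\Box$ shows that $\Iso$ is a groupoid as well; but for a functor between groupoids the discrete-fibration condition and the discrete-opfibration condition are interchanged by passing to inverse morphisms, so $\Box$ is automatically also a discrete fibration, hence a discrete bifibration. (One could instead verify the fibration property by hand, transferring a structure backward along $f$ by applying Theorem~\ref{Stal se ten zazrak?} to a homotopy inverse $g$ of $f$, with $f$ itself in the role of the left homotopy inverse of $g$; that application also furnishes the required extension of $f$. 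The groupoid argument avoids this bookkeeping.)

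So fix an object $e'$ of $\Iso$, say $e' = [(A,\pa,\bfmu)]$, and an arrow $\beta \colon \Box(e') \to b''$ of $\CHE$, i.e.\ the chain homotopy class of a chain homotopy equivalence $f \colon (A,\pa) \to (B,\pa)$ with $b'' = (B,\pa)$. To lift $\beta$, invoke Corollary~\ref{hruba verze}: it supplies an \Ainfty-structure $\bfnu$ on $(B,\pa)$ together with an extension $F \colon (A,\pa,\bfmu) \to (B,\pa,\bfnu)$ of $f$. Put $e'' := [(B,\pa,\bfnu)]$ and $\varepsilon := [F]$; then $\varepsilon \colon e' \to e''$ is an arrow of $\Iso$ with $\Box(\varepsilon) = \beta$, which gives existence of the lift.

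For uniqueness, suppose $\varepsilon_1 \colon e' \to e_1$ and $\varepsilon_2 \colon e' \to e_2$ both lie over $\beta$, and choose representatives $F_i \colon (A,\pa,\bfmu_i) \to (B,\pa,\bfnu_i)$ with linear parts $f_i$. Since both $(A,\pa,\bfmu_i)$ represent $e'$ they are isotopic, and $f_1 \sim f \sim f_2$ (so in particular each $f_i$, being homotopic to the chain homotopy equivalence $f$, is itself one). This is exactly hypothesis~(i) of Proposition~\ref{Pojedu tento tyden do Mercina?} applied to $F_1$ and $F_2$. Its conclusion~(ii) gives that $(B,\pa,\bfnu_1)$ and $(B,\pa,\bfnu_2)$ are isotopic, i.e.\ $e_1 = e_2$ as objects of $\Iso$; its conclusion~(iv) produces isotopies $S$ and $T$ making the square on $F_1, F_2$ commute up to \Ainfty-homotopy, which is precisely one of the equivalent conditions defining the identification of arrows in $\Iso$, so $\varepsilon_1 = \varepsilon_2$ as arrows.

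I expect the only subtle point to be this last bookkeeping: noticing that the bare datum ``two lifts over the same base arrow'' is literally hypothesis~(i) of Proposition~\ref{Pojedu tento tyden do Mercina?}, and that its conclusions~(ii) and~(iv) are exactly what is needed to deduce both $e_1 = e_2$ and $\varepsilon_1 = \varepsilon_2$. Everything genuinely nontrivial --- that transferring over $f$ and transferring over a homotopy inverse of $f$ are interchangeable, and that isotopies at the source and target can be matched up coherently --- is already encapsulated in that proposition, so the present proof should require no additional computation.
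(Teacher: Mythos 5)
Your proof is correct, and its substantive half coincides with the paper's: surjectivity via the trivial structure, existence of source-specified lifts via Corollary~\ref{hruba verze}, and uniqueness by observing that two lifts of the same base arrow out of the same object are literally in the situation of item~(i) of Proposition~\ref{Pojedu tento tyden do Mercina?}, whose conclusions (ii) and (iv) give equality of the target objects and of the arrows respectively. Where you genuinely diverge is the second lifting property: the paper verifies it by hand as well --- existence by applying Theorem~\ref{Stal se ten zazrak?} with the roles of $f$ and a homotopy inverse $g$ reversed, uniqueness by invoking item~(ii) of the Proposition --- whereas you derive it formally from the facts that $\CHE$ is a groupoid, that a discrete opfibration over a groupoid has a groupoid as total category, and that for a functor between groupoids the two discrete lifting properties are exchanged by inverting arrows. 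I checked this formal argument and it is sound; it buys economy, since only one of the two dual verifications, and only the implication (i)\,$\Rightarrow$\,(ii)\,\&\,(iv) of the Proposition rather than both (i)\,$\Rightarrow$\,(iv) and (ii)\,$\Rightarrow$\,(iv), is actually consumed. What it costs is the explicit description of the backward lift (the transferred structure on the source of $f$), which the paper's hands-on treatment produces and then reuses when describing the presheaf on $\CHE$ whose category of elements recovers $\Iso$. A terminological aside: relative to Definition~\ref{svitici snehulacek} your naming of the source-specified case as the opfibration is the consistent one; the paper's proof swaps the two labels, but the mathematics on each side matches up as described above.
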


\begin{proof}
Since every chain complex carries the trivial \Ainfty-structure with
all higher products identically zero, the functor  $\Box$ is surjective
on objects as claimed.

We will look first at the more complicated opfibration case. The case of
fibrations is
`covariant' and therefore
simpler. Definition~\ref{svitici snehulacek} requires that, for the homotopy
class $[f]$ of a chain homotopy equivalence $f: (A,\pa) \to (B,\pa)$
and an isotopy class $\iB \in \Iso$ such that
$\Box( \iB) = (B,\pa)$, there exists a unique arrow
$\phi : \iA \to \iB$ in $\Iso$ such that $\Box(\phi) = [f]$. Let us
prove the uniqueness.  Assume that $\phi',\phi''$ are two such
arrows, represented by \Ainfty-morphisms
\[
F' : (A,\pa,\bfmu') \to (B,\pa,\bfnu')\ \hbox{ resp.} \ \ F'' : (A,\pa,\bfmu'') \to
(B,\pa,\bfnu'').
\]
The linear  parts $f'$ resp.~$f''$ of $F'$ resp.~$F''$ are chain homotopic 
to $f : (A,\pa) \to (B,\pa)$,  therefore $f' \sim f''$. Moreover,
$(B,\pa,\bfnu')$ is isotopic to $(B,\pa,\bfnu'')$, via an isotopy $T$
in the diagram
\begin{equation}
\label{Za chvili pojedu za Jarkou.}
\xymatrix{(A,\pa,\bfmu') \ar[r]^{F'}    &  (B,\pa,\bfnu')  \ar[d]^T 
\\
(A,\pa,\bfmu'')  \ar[r]^{F''}   & (B,\pa,\bfnu'').
}
\end{equation}
We are thus in the situation of item (ii) of Proposition~\ref{Pojedu tento
  tyden do Mercina?}, so $F'$ and $F''$ belong to the same equivalence
class. 

Let us prove the existence of a lift. Assume that $\iB$ is the isotopy
class of some $(B,\pa,\bfnu)$. Choose a homotopy inverse $g:(B,\pa)
\to (A,\pa)$ of $f$ and invoke Theorem~\ref{Stal se ten zazrak?} with the
roles of $f$ and $g$ reversed. We obtain \Ainfty-morphisms 
$G$ and $F$ in the diagram
\[
\xymatrix@C=3em@R=1.5em{ 
(A,\pa,\bfmu) \ar@{|->}[dd]_\Box
\ar@/^1em/@{-->}[rr]^{\redukce{\scriptsize $F$}}  
\ar@{<--}@/_1em/[rr]^{{G}} &&  
(B,\pa,\bfnu) \ar@{|->}[dd]^\Box
\\
&&
\\
(A,\pa)    \ar@/^1em/[rr]^f   \ar@{<--}@/_1em/[rr]^g    & & (B,\pa).
}
\]
The class of the \Ainfty-morphism $F$ is clearly  the required lift.

The fibration property can be established similarly. The
uniqueness leads, instead of~\eqref{Za chvili pojedu za Jarkou.}, to the diagram
\[
\xymatrix{(A,\pa,\bfmu') \ar[r]^{F'} \ar[d]_S    &  (B,\pa,\bfnu') 
\\
(A,\pa,\bfmu'')  \ar[r]^{F''}   & (B,\pa,\bfnu''),
}
\]
which is the situation of item (i) of Proposition~\ref{Pojedu tento
  tyden do Mercina?}. In the fibration case,
Corollary~\ref{hruba verze} suffices to construct, given
$(A,\pa,\bfmu)$, an \Ainfty-morphism $F$ extending  $f :(A,\pa) \to (B,\pa)$ 
as in 
\[
\xymatrix@C=3em@R=1.5em{ 
(A,\pa,\bfmu) \ar@{|->}[dd]_\Box
\ar@{-->}[rr]^{\redukce{\scriptsize $F$}}  
&&  
(B,\pa,\bfnu) \ar@{|->}[dd]^\Box
\\
&&
\\
(A,\pa)    \ar[rr]^f     && (B,\pa).
}
\]
The class of $F$ is then a lift of $[f]$. 
\end{proof}

Classically, there is a one-to-one correspondence
between presheaves  $R$  (i.e.\ contravariant functors to the category
$\Set$ of sets) on a category $\B$ and discrete fibrations $\ttE \to
\B$ with small fibers: 
\[
\big\{ \hbox { discrete fibrations over $\B$ with small fibers } \big\} 
\cong
\{\ \hbox {presheaves over $\B$ }\}.
\]
The category $\ttE$ is the `category of elements' of the functor
$R$, usually denoted by $\int_\B R$. 
In the situation of Theorem~\ref{Uz druhy vikend prsi.}, the
contravariant functor
$R: \CHE \to \Set$ assigns to 
a chain complex $(A,\pa)$ the set $R(A,\pa)$ of isotopy
classes of \Ainfty-algebras on~$(A,\pa)$. For a chain homotopy equivalence 
$f : (A,\pa) \to (B,\pa)$, the set map 
$R([f]) : R(B,\pa) \to  R(A,\pa)$ sends the isotopy class of an
\Ainfty-algebra $(B,\pa,\bfnu)$ to the unique isotopy class  of an
\Ainfty-algebra $(A,\pa,\bfmu)$ for which there exists an \Ainfty-morphism
$G : (B,\pa,\bfnu) \to (A,\pa,\bfmu)$ extending a chain
homotopy inverse of $f$. 

Similarly, there is  a
one-to-one correspondence between {\em covariant\/} functors $L:\B \to \Set$
and discrete opfibrations  $\E \to
\B$ with small fibers: 
\[
\big\{ \hbox { discrete opfibrations over $\B$ with small fibers } \big\} 
\cong
\{\ \hbox {covariant functors $\B \to \Set$ }\}.
\]
The category $\E$ corresponding to $L$ is usually denoted
by $\int^\B L$.
We leave the explicit description of the covariant functor  $L : \CHE \to
\Set$ corresponding to $\Box : \Iso \to \CHE$ as a simple exercise
for the reader. The following proposition is just a fancy
packing of already established results.

\begin{proposition}
There are isomorphisms of categories \
$
\Iso \cong \int_\CHE R \cong \int^\CHE L\ .
$    
\end{proposition}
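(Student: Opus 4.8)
The plan is to reduce the statement to the classical correspondence between discrete fibrations over a base $\B$ and presheaves on $\B$ (together with its dual for opfibrations), applied to the bifibration $\Box : \Iso \to \CHE$ of Theorem~\ref{Uz druhy vikend prsi.}. Nothing new has to be proved; the work is entirely in matching the abstract construction with the concrete functors $R$ and $L$ already described in the text.

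First I would recall the construction of the category of elements in the form tailored to discreteness. Given a discrete fibration $p:\E\to\B$ with small fibers, one obtains a presheaf $R_p$ with $R_p(b):=p^{-1}(b)$, where a morphism $\beta:b'\to b$ acts by sending an object $e$ over $b$ to the source of the unique lift of $\beta$ with target $e$ supplied by Definition~\ref{svitici snehulacek}; uniqueness of lifts is exactly what makes $R_p$ a functor. Its category of elements $\int_\B R_p$ has objects the pairs $(b,e)$ with $e\in R_p(b)$ and, for each $\beta:b'\to b$ with $R_p(\beta)(e)=e'$, one morphism $(b',e')\to(b,e)$; the evident assignment $(b,e)\mapsto e$ is then an isomorphism of categories $\int_\B R_p\cong\E$. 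Dually, a discrete opfibration $q:\E\to\B$ with small fibers determines a covariant functor $L_q:\B\to\Set$, $L_q(b):=q^{-1}(b)$, with $L_q(\beta)$ pushing an object over $b'$ forward along the unique opcartesian lift of $\beta$, and again $\int^\B L_q\cong\E$.

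Next I would take $p=q=\Box$. This is legitimate: $\Box$ is a discrete bifibration by Theorem~\ref{Uz druhy vikend prsi.}, and its fiber over a chain complex $(A,\pa)$ — the set of isotopy classes of \Ainfty-structures on $(A,\pa)$ — is small, being a quotient of a subset of $\prod_{k\ge2}\Lin(A^{\ot k},A)$. It then remains only to identify $R_\Box$ with $R$ and $L_\Box$ with $L$. On objects there is nothing to check: $R(A,\pa)=R_\Box(A,\pa)=L_\Box(A,\pa)=L(A,\pa)$ is by definition the set of isotopy classes on $(A,\pa)$. On morphisms I would observe that the unique lift constructed in the proof of Theorem~\ref{Uz druhy vikend prsi.} is, in the fibration case, the \Ainfty-morphism $G$ extending a chain homotopy inverse $g$ of $f$, so that $R_\Box([f])$ sends the class of $(B,\pa,\bfnu)$ to the class of the source of such a $G$ — precisely the recipe given for $R([f])$; and in the opfibration case it is the extension $F$ of $f$ itself, so $L_\Box([f])$ sends the class of $(A,\pa,\bfmu)$ to the class of the target of such an $F$, precisely the recipe for $L([f])$. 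Hence $R=R_\Box$ and $L=L_\Box$, the two classical isomorphisms read $\Iso\cong\int_\CHE R$ and $\Iso\cong\int^\CHE L$, and composing them yields also $\int_\CHE R\cong\int^\CHE L$, which is the assertion.

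I do not expect a serious obstacle. The one substantive input — that the fiberwise actions $R([f])$ and $L([f])$ are well defined, so that $R$ and $L$ really are functors — is exactly the uniqueness of lifts established in Theorem~\ref{Uz druhy vikend prsi.}, which itself rests on Proposition~\ref{Pojedu tento tyden do Mercina?}. Everything else is routine unwinding of the definition of the category of elements; the only mildly delicate bookkeeping is to check that a morphism of $\int_\CHE R$ is literally the same datum as an arrow of $\Iso$ lying over a prescribed homotopy class $[f]$ with prescribed source and target, which is immediate once $\Box$ is known to be a discrete fibration.
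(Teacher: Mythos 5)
Your argument is correct and coincides with the paper's, which offers no proof beyond the remark that the proposition is a ``fancy packing'' of already established results — namely the classical correspondence between discrete (op)fibrations with small fibers and (covariant) functors to $\Set$, applied to the bifibration of Theorem~\ref{Uz druhy vikend prsi.}; your unwinding of the category-of-elements construction, the smallness check, and the identification $R_\Box = R$, $L_\Box = L$ is exactly what is intended. One bookkeeping slip: in the fibration case the unique lift of $[f]$ over a prescribed target is the morphism $F$ extending $f$ itself (the morphism $G$ extending a homotopy inverse $g$ lies over $[g]$, not $[f]$), so $R_\Box([f])$ sends the class of $(B,\pa,\bfnu)$ to the source of $F$, which is the \emph{target} of $G$ — as literally written, ``the source of such a $G$'' would return $(B,\pa,\bfnu)$ itself.
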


\section{Generalization to strongly homotopy $\oP$-algebras}
\label{Medvidek s Micinkou}

Operads (resp.~cooperads) in this section will live in the monoidal
category of graded modules over a commutative unital ring $R$ which
will sometimes  be assumed to be a field of characteristic $0$. 
Operads (resp.~cooperads)
will be unital (resp.~counital) such that
$\oP(0)=0$, $\oP(1) \cong R$ (resp.~$\cooP(0)=0$, $\cooP(1) \cong R$).

\subsection*{Extending homotopies -- the non-$\Sigma$ case}

Our wonderful proof of Lemma~\ref{Vcera jsem ten kopec na Tocnou
  vyjel.} made extensive use of the fact that \Ainfty-homotopies can be
described  as (co)derivation homotopies between dg coalgebra
morphisms. In this subsection we show that the same approach
applies also to $\oP_\infty$-algebras, with  $\oP$  a
{\/\em nonsymmetric\/} quadratic Koszul operad $\oP$.

For a non-symmetric (non-$\Sigma$ for short)  cooperad $\cooP$ and a~(graded)
$R$-module $V$ denote by $\CC V$ the 
cofree counital conilpotent $\cooP$-coalgebra cogenerated by $V$.
As a (graded) $R$-module, 
\begin{equation}
\label{Jituska s Jitulinkou}
\CC V =  \bigoplus_{n \geq 1} \CCn nV,\ \hbox { where } 
\CCn nV :=  \cooP(n) \ot
\otexp Vn.
\end{equation} 
We will be interested in objects of the form $(\CC V,\delta)$, with 
$\delta$ a degree $-1$ coderivation that squares to zero. 
Such a $\delta$ is determined by degree $-1$ linear maps $\delta_n :
\CCn nV \to V$, $n \geq 1$. The condition $\delta^2 = 0$ implies that $\delta_1
: \cooP(1) \ot V \cong V \to V$ is a differential, so $(V,\delta_1)$
is a chain complex. 

Similarly, a dg coalgebra morphism $\phi : (\CC {V'},\delta') \to
(\CC{V''},\delta'')$ is determined by a sequence of degree $0$ linear
maps $\phi_n : \CCn nV \to V$, $n \geq 1$. The condition $\delta''\phi = \phi\delta'$ implies
that $\phi_1 : (V',\delta'_1) \to  (V'',\delta''_1)$ is a chain map.
A coderivation homotopy between $\phi$  and
another dg coalgebra map  $\psi : (\CC {V'},\delta') \to
(\CC{V''},\delta'')$ is a degree $+1$ linear map $\eta : \CC {V'} \to
\CC {V''}$ such that
\begin{subequations}
\begin{equation}
\label{Vecer jedeme}
\psi - \phi=  \delta'' \eta + \eta  \delta'
\end{equation}
and, for $n \geq 1$ and an arbitrary structure operation  $\Delta \in \cooP(n)$,
 \begin{equation}
\label{s Jarkou na chalupu.}
\Delta \eta = \sum_{i=1}^n   (\otexp \phi{i-1}\ot \eta \ot 
\otexp \psi {n-i})\Delta \ .
\end{equation}
\end{subequations}
Such an $\eta$ is
determined by a family $\eta_n : \CCn nV \to V$, $n \geq 1$ 
of degree $+1$ linear maps. The following proposition generalizes the
trick used in the proof of Lemma~\ref{Vcera jsem ten kopec na Tocnou vyjel.}.

\begin{proposition}
\label{Cas se krati.}
Let $\phi : (\CC {V'},\delta') \to  (\CC {V''},\delta'')$ be a morphism
of dg $\cooP$-coalgebras. An arbitrary  chain map
$\psi_1 : (V',\pa') \to (V'',\pa'')$, chain
homotopic to the linear part $\phi_1$ of $\phi$ via a chain homotopy
$\eta_1$, can be extended to a morphism $\psi : (\CC
{V'},\delta') \to  (\CC {V''},\delta'')$ of dg
$\cooP$-coalgebras,
homotopic to $\phi$ via a coderivation homotopy $\eta$
extending $\eta_1$. 
\end{proposition}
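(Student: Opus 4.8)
The plan is to rerun, in the present coalgebraic setting, the inductive construction from the proof of Lemma~\ref{Vcera jsem ten kopec na Tocnou vyjel.}; the only genuinely new point is to verify that the recursion remains ``triangular.'' First I would record the relevant bookkeeping: since $\CC{V''}$ is the cofree conilpotent counital $\cooP$-coalgebra on $V''$, a $\cooP$-coalgebra morphism $\CC{V'}\to\CC{V''}$, a coderivation along such a morphism, and a coderivation homotopy as in~\eqref{Vecer jedeme}--\eqref{s Jarkou na chalupu.} are each uniquely determined by their corestrictions (their composites with the canonical projection $\CC{V''}\to V''$). Conversely --- and this is the analogue of the fact that in the tensor-coalgebra case every family of multilinear maps assembles to a coderivation homotopy --- an \emph{arbitrary} family of degree $+1$ maps $\eta_n:\CCn n{V'}\to V''$, $n\ge 1$, assembles to a unique degree $+1$ map $\eta:\CC{V'}\to\CC{V''}$ satisfying~\eqref{s Jarkou na chalupu.} relative to any prescribed pair of $\cooP$-coalgebra morphisms with linear parts $\phi_1$ and $\psi_1$. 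All of this belongs to the apparatus of~\cite{ws01}; at this stage $\cooP$ need not be Koszul.

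With this in place, I would keep $\eta_1$ equal to the given chain homotopy between $\phi_1$ and $\psi_1$, choose the higher corestrictions $\eta_n$ ($n\ge 2$) arbitrarily, and corestrict equation~\eqref{Vecer jedeme} to $V''$. Expanding $\delta''\eta$ and $\eta\delta'$ through the cooperadic structure of $\cooP$ produces --- exactly as~\eqref{Za chvili jdu s Janou do MATu.} unfolded into the system $(H_n)$ in the proof of Lemma~\ref{Vcera jsem ten kopec na Tocnou vyjel.} --- an infinite system of equations, the $n$-th of which expresses $\psi_n$ through the $\psi_k$, the already fixed maps $\phi_k$ and $\eta_k$, the differentials $\pa'=\delta'_1$ and $\pa''=\delta''_1$, the higher components $\delta''_k$, and the structure operations of $\cooP$. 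The crux is that only $\psi_1,\dots,\psi_{n-1}$ appear on the right-hand side of the $n$-th equation; granting this, the system is solvable recursively, the first equation being the hypothesis $\psi_1-\phi_1=\pm\pa''\eta_1\pm\eta_1\pa'$ and each later one defining $\psi_n$ once $\psi_1,\dots,\psi_{n-1}$ are known. Let $\psi$ be the $\cooP$-coalgebra morphism assembled from the resulting family. By construction $\psi$ and $\phi+\delta''\eta+\eta\delta'$ have the same corestrictions, and since both $\psi-\phi$ and $\delta''\eta+\eta\delta'$ are coderivations relative to the pair $(\phi,\psi)$, hence --- like the coalgebra morphism $\psi$ --- determined by their corestrictions, this forces the identity $\psi=\phi+\delta''\eta+\eta\delta'$ of maps $\CC{V'}\to\CC{V''}$.

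Now the ``miracle'' of Lemma~\ref{Vcera jsem ten kopec na Tocnou vyjel.} applies verbatim:
\[
\delta''\psi-\psi\delta'=\delta''(\phi+\delta''\eta+\eta\delta')-(\phi+\delta''\eta+\eta\delta')\delta'=(\delta''\phi-\phi\delta')+(\delta'')^2\eta-\eta(\delta')^2=0,
\]
using $(\delta')^2=(\delta'')^2=0$ and $\delta''\phi=\phi\delta'$. Hence $\psi$ is a morphism of dg $\cooP$-coalgebras, and $\eta$ is a coderivation homotopy from $\phi$ to $\psi$, because~\eqref{Vecer jedeme} holds by construction and~\eqref{s Jarkou na chalupu.} holds automatically for the assembled $\eta$. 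Since $\psi_1$ is the prescribed chain map and $\eta_1$ the prescribed chain homotopy, this gives the proposition.

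The step I expect to be the main obstacle is the triangularity claim. To establish it I would unwind the corestriction $(\delta''\eta)_n$ by means of~\eqref{s Jarkou na chalupu.}: the morphism $\psi$ enters that composite only through the right-hand slots $\otexp\psi{n-i}$ of the coderivation-homotopy identity, and every such term also carries an $\eta$-slot which, because $\cooP(0)=0$, consumes at least one of the $n$ inputs; therefore any $\psi$-slot that occurs consumes at most $n-1$ inputs and is one of $\psi_1,\dots,\psi_{n-1}$. The remaining ``diagonal'' contributions, in which no inputs are regrouped, collapse via $\cooP(1)\cong R$ to the terms $\pm\pa''\eta_n\pm\eta_n\pa'$, exactly as in $(H_n)$. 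This is the only place where the standing hypotheses $\cooP(0)=0$ and $\cooP(1)\cong R$ are used, and essentially the only point at which the argument is not a line-by-line transcription of the proof of Lemma~\ref{Vcera jsem ten kopec na Tocnou vyjel.}.
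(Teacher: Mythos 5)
Your proposal is correct and follows essentially the same route as the paper's own proof: choose the higher $\eta_n$ arbitrarily, read off equation~\eqref{Vecer jedeme} componentwise as a recursively solvable (triangular) system defining the $\psi_n$, and then conclude $\delta''\psi=\psi\delta'$ by the same formal ``miracle'' computation as in Lemma~\ref{Vcera jsem ten kopec na Tocnou vyjel.}. The only difference is that you spell out two points the paper leaves implicit --- that $\cooP(0)=0$ forces each $\psi$-slot in the $n$-th equation to have arity at most $n-1$, and that both sides of~\eqref{Vecer jedeme} are determined by their corestrictions --- which is a welcome addition rather than a deviation.
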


\begin{proof}
Choose degree $+1$ linear maps $\eta_n : \CCn n{V'} \to
V''$, $n \geq 2$, and extend the family $\{\eta_n\}_{n \geq 1}$ to a
  linear map $\eta : \CC{V'} \to \CC{V''}$ satisfying~\eqref{s Jarkou na
    chalupu.}. In the next step we will extend $\psi_1$ to a morphism
  $\psi :\CC
  {V'} \to \CC{V''}$ such that~\eqref{Vecer jedeme} holds. Let
\[
J(n) := \{k,\Rada r1k\ |\ k \geq 2,\   
      r_1,\ldots, r_k \geq 1,\
r_1+ \cdots + r_k = n\}
\] 
and, for $\kappa \in J(n)$, denote by $\cogamma_\kappa : \cooP(n) \to \cooP(k) \ot
\cooP(r_1) \ot \cdots \ot \cooP(r_k)$ the corresponding structure
operation of the cooperad $\cooP$.
Given linear maps $u_i : \CCn {r_i}{V'} \to V''$, $1 \leq i \leq k$, and \hbox{$\omega :
\CCn k{V'} \to V''$}, define
$\Upsilon_\kappa(\omega;\Rada u1n) :\CCn n{V'}
\to V''$ 
to be the composite 
\begin{align*}
&\xymatrix@1@C=1.3em{\CCn n{V'} \
\ar[rr]^(.28){\cogamma_\kappa \ot \otexp{\id_{V'}} n}\ && \
\ \cooP(k) \ot \cooP(r_1)\ot \cdots \ot \cooP(r_k)  \ot \otexp {V'}n \ar[rr]^(.85)\cong
&&
}
\\
& \hskip 4em   \xymatrix{\ar[r]^(.15)\cong
& \cooP(k) \ot \CCn {r_1}{V'} \ot   \cdots \ot \CCn {r_k}{V'} \ar[rr]^(.68){\id_{\ssscooP(k)} \ot
u_1\ot \cdots \ot u_r}
&& \CCn k{V'}   \ar[r]^(.6)\omega  &    V''.
}
\end{align*}
Likewise, for $2 \leq k \leq n$ and $1 \leq i \leq k$, denote by
\[
\cogamma^k_{n,i} : \cooP(n) \to \cooP(k) \ot \otexp{\cooP(1)}{i-1} \ot
\cooP(l) \ot \otexp{\cooP(1)}{k-i} \cong \cooP(k) \ot \cooP(l),\  l=
k+n-1,
\]
the corresponding structure operation.
For linear maps $\sigma : \CCn k{V'} \to V''$ and $v : \CCn l{V'} \to V''$,
let  $\Gamma^k_{n,i} (\sigma; v) : \CCn  n{V'} \to V''$ be the composite
\[
\begin{aligned}
&\xymatrix@1{
\CCn n{V'} \ \ar[rr]^(.38){\cogamma^k_{n,i} \ot \id_{V'}^{\ot n}} 
&& \ \cooP(k) \ot \cooP(l) \ot \otexp {V'}n \ \ar[r]^(.8)\cong&
}
\\
& \hskip 4em   
\xymatrix@1{\ar[r]^(.15)\cong
 & \
\cooP(k) \ot \otexp {V'}{i-1} \ot \CCn l{V'}  \ot 
\otexp {V'}{k-i} \ 
\ar[rrr]^(.68){\id_{\ssscooP(k)}  \ot \id_{V'}^{\ot i-1} \ot v \ot \id_{V'}^{\ot k-i} }
&&& \ \CCn k{V'} \ar[r]^(.65)\sigma & V''.
}
\end{aligned}
\]
Equation~\eqref{Vecer jedeme} is equivalent to the following system of equations
\begin{gather*}
\tag{$\eta_n$}
\psi_n  =  \ d (\eta_n)  +\phi_n
+  \sum_{\kappa \in J(n)}\ \sum_{i= 1}^k \pm\Upsilon_\kappa
(\delta''_k; \psi_{r_1},\ldots,
\psi_{r_{i-1}}  ,  \eta_{r_i}, \phi_{r_{i+1}}, \ldots,\phi_{r_k})
\\
\nonumber 
+   \sum_{k=2}^n \  \sum_{1= i}^k \pm
\Lambda^k_{n,i}(\eta_k; \delta'_l),
\ n \geq 1,
\end{gather*}
in which $d(-)$ denotes the induced differential on the space of linear
maps $\CCn n{V'} \to V''$. As in the proof of Lemma~\ref{Vcera jsem ten
  kopec na Tocnou vyjel.}, the system~$(\eta_n)$ determines
inductively  the higher
components of $\psi$. It remains to show that $\psi$
commutes with the differentials. This is done in exactly the same way
as in the last paragraph of the proof of  Lemma~\ref{Vcera jsem ten
  kopec na Tocnou vyjel.}.
\end{proof}

Let $\oP$ be an operad over a field $\bbk$ of characteristic $0$. 
In~\cite{zebrulka},
$\oP$-infinity, or $\oP_\infty$-algebras for short, were defined as
algebras for the minimal model of $\oP$. If $\oP$ is quadratic Koszul,
they are the same as dg coalgebras of the form $(\CC{\susp A},\delta)$,
with $\delta$ a degree $-1$ coderivation that squares to zero, where
$\cooP := \oP^\antishriek$, the Koszul dual cooperad of
$\oP$~\cite[\S 10.1.1]{loday-vallette}. 
$\oP_\infty$-algebras, their
morphism and homotopies look similarly as the
corresponding $A_\infty$-notions in the
introduction. Namely, $\oP_\infty$-algebras are
objects $(A,\pa,\bfmu) =(A,\partial,\mu_2',\mu_3',\ldots)$, where
$(A,\pa)$ is a chain complex and \hbox{$\mu_n: \cooP(n)\! \ot\!
  \!\otexp An \!\to\! A$}, $n \geq 2$, are
$R$-linear maps of degree \hbox{$n\!-\!2$}. 
Their morphism are given by sequences
$F = (f,f_2,f_3,\ldots)$, with  $f$ a chain map and \hbox{$f_n :
\cooP(n) 
\ot \otexp {A'}n \to A''$} $R$-linear degree $n\!-\!1$ maps. Finally,
homotopies are given by sequences $H = (h,h_2,h_3,\ldots)$, where $h$
is a chain homotopy and  \hbox{$h_n :
\cooP(n) \! \ot \!\! \otexp {A'}n \to A''$} are degree~$n$ $R$-linear
maps.

In situations specified in the following
definition, the notion of
$\oP_\infty$-algebras makes sense over the ring of integers $\bbZ$,
and thus for an arbitrary commutative unital ring~$R$.

\begin{definition}
\label{Uz to neni ono.}
Suppose that there exists a non-$\Sigma$ operad $\oP_{\Se}
^!$ in the category of sets such that $\oP^\antishriek =
{\rm Map}_{\ \Se}(\oP_\Se^!,\bbk)$, with the obvious cooperad
structure. Taking $\cooP : =
{\rm Map}_{\ \Se}(\oP_\Se^!,\bbZ)$, we define {\em homotopy $\oP$-algebras
over $\bbZ$\/}, or simply {\/\em $\oP^\bbZ_\infty$-algebras\/}, 
as dg $\cooP$-coalgebras in the category of Abelian groups
of the form $(\CC{\susp A},\delta)$.  By overall tensoring with $R$ we
obtain the category of {\/\em $\oP^R_\infty$-algebras\/}  for an
arbitrary unital commutative ring $R$.
\end{definition}

The assumption of Definition~\ref{Uz to neni ono.} is satisfied
e.g.~for the operad $\Ass$ governing associative algebras when 
we are in the situation of
the previous sections which addressed $\Ass_\infty^R$-algebras. The interested reader can find other
cases where  Definition~\ref{Uz to neni ono.} applies in~\cite{encyclopedia}.

\noindent {\bf Conclusion.}
Proposition~\ref{Cas se krati.} with $V' := \susp A'$, $V'' := \susp A''$, gives a
{verbatim} g{eneralization}
of Lemma~\ref{Vcera jsem ten kopec na Tocnou vyjel.} and
Corollary~\ref{Bude dnes opravene kolo?} to $\oP_\infty$-algebras if
$\oP$ is a non-$\Sigma$ quadratic Koszul operad
over a field of characteristic~$0$. If the assumptions of
Definition~\ref{Uz to neni ono.} are fulfilled,
Lemma~\ref{Vcera jsem ten kopec na Tocnou vyjel.} and
Corollary~\ref{Bude dnes opravene kolo?}
hold also for $\oP_\infty^R$-algebras.

\subsection*{Extending homotopies -- the $\Sigma$-case} 
Let $\oP$ be a quadratic Koszul operad over a field $\bbk$ of characteristic $0$, and
$\cooP := \oP^\antishriek$. As before, $\oP_\infty$-algebras will be
dg coalgebras of the form \hbox{$(\CC{\susp A},\delta)$}. In terms of
multilinear operations, $\oP_\infty$-algebras and their
homomorphisms look similar as in the non-$\Sigma$ case, except
that $\CCn nV$ in~\eqref{Jituska s Jitulinkou} must be replaced by the subspace
\hbox{$(\cooP(n)\! \ot \!\!\otexp Vn)^{\Sigma_n}$} of $\Sigma_n$-invariants.
However, homotopies between morphisms are not given by
coderivation homotopies, but by a suitable path or
cylinder objects in the related model structure. It was proven
in~\cite{tale} that, for Koszul
quadratic $\oP$, the correct homotopy can
be represented as an algebra for the cofibrant two-colored
operad $\minim_{\malesipky}$ described in~\cite[Theorem~18]{ws01}.

Let us recall the necessary definitions, referring
to~\cite[Section~4]{ws01} for details. If  $\minim   = (\calF(X),d)$
is the minimal model of
$\oP$, where $\calF(-)$ denotes the free operad functor, 
a $\oP_\infty$-algebra is the same as a morphism $\minim \to
\End_A$ to the endomorphism operad of
$A$. To describe $\oP_\infty$-morphisms, we need the two-colored operad
\[
\minim_\sipka = (\calF(X_\B; p; X^p; X_\W),D)
\]
of~\cite[Theorem~7]{ws01}, generated by two copies $X_\B$ and $X_\W$
of $X$ for $\oP_\infty$-algebras sitting on $A'$ resp.~$A''$, the
generator $p$ for the linear part $f$ of $F$, and the shifted $X$ for
its higher parts. A $\oP_\infty$-morphism is then a dg operad morphism 
$\minim^p_\sipka \to \End_{A',A''}$ to the endomorphism operad of the
two-colored chain complex $\{A',A''\}$.
A $\oP_\infty$-homotopy between  $F$ and $G$ is a
morphism $\rho : \minim_{\malesipky} \to  \End_{A',A''}$, 
where
\[
\minim_{\malesipky} = (\calF(X_\B; p,q,h; X^p,X^q,X^h; X_\W),D)
\]
is the two colored operad introduced in~\cite[Theorem~18]{ws01}. It is
a cellular extension of $\minim_\sipka$, with $\{q,X^q\}$ the generators
for the components of $G$ and $\{h,X^h\}$ the generators for the
components of the $\oP_\infty$-homotopy added. Very crucially,  
$D(h) = q-p$. Consider finally the dg suboperad
\[
\minim^{p,h}_\sipka = (\calF(X_\B; p,q,h; X^p; X_\W),D) 
\]
of $\minim_{\malesipky}$. Algebras for $\minim^{p,h}_\sipka$ clearly consist
of a $\oP_\infty$-morphism $F$, a chain map $g$ and a chain 
homotopy between the linear part $f$ of $F$ and $g$. 

\begin{proposition}
\label{Voskova panenka a certik Pik} 
Each morphism $\alpha : \minim^{p,h}_\sipka \to \oA$ of two-colored dg
operads in the diagram
\[
\xymatrix@C=3em{\minim_{\malesipky} \ar@{-->}[r]^\rho & \oA \ar@{=}[d]
\\
\minim^{p,h}_\sipka \ar[ur]^\alpha\ar[r]  \ar@{^{(}->}[u]^\iota   &\oA
}
\]
in which $\iota$ is an inclusion of dg colored operads, 
lifts to a morphism $\rho  : \minim_{\malesipky} \to \oA$.
\end{proposition}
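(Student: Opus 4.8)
The plan is to mimic the coalgebra argument of Lemma~\ref{Vcera jsem ten kopec na Tocnou vyjel.}, but now carried out at the level of the cofibrant two-colored operad $\minim_{\malesipky}$. The key observation is that $\minim_{\malesipky}$ is obtained from $\minim^{p,h}_\sipka$ by freely adjoining the generators $\{q, X^q, X^h\}$ — that is, the generators for the higher components $X^q$ of the second $\oP_\infty$-morphism $G$ and the higher components $X^h$ of the $\oP_\infty$-homotopy — subject to the differential relations in which, crucially, $D(h) = q - p$ already holds in $\minim^{p,h}_\sipka$. So to extend $\alpha$ to $\rho$ it suffices to choose images $\rho(X^h)$ of the homotopy generators \emph{arbitrarily} (as maps into $\oA$ of the appropriate color and degree) and then to \emph{define} $\rho(q)$ and $\rho(X^q)$ inductively by arity so that the relation $D \rho = \rho D$ is forced. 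First I would fix a basis-by-arity filtration of the free operads involved and, for each arity $n$, read off the component of the identity $D\rho = \rho D$ applied to the generator $q$ (resp.\ $X^q$): this component expresses $\rho(q_n)$ (resp.\ $\rho(X^q_n)$) as a differential term $d(\rho(X^h_n))$ plus $\rho(p_n)$ (which is already determined by $\alpha$) plus a sum of composites built out of $\rho$ on generators of strictly smaller arity — exactly the analogue of the system $(H_n)$, or $(\eta_n)$, from the earlier proofs. This is the inductive construction of $\rho$ on the subalgebra generated by $\{X_\B; p,q,h; X^q; X_\W\}$.

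The second and only subtle step is to check that the $\rho$ so constructed is genuinely a dg operad morphism, i.e.\ that $D\rho = \rho D$ holds not merely on the generator $q$ and $X^q$ but on \emph{all} of $\minim_{\malesipky}$. Here the same "miracle" as in Lemma~\ref{Vcera jsem ten kopec na Tocnou vyjel.} applies verbatim: writing the defining equations as $\rho(q) = \rho(p) + D\rho(X^h) + (\text{lower terms})$ and using that $\rho$ restricted to $\minim^{p,h}_\sipka$ is already a chain map (because $\alpha$ is), one expands $D^2 = 0$ on $\minim_{\malesipky}$ and finds that the failure of $D\rho = \rho D$ on a composite involving $q$ or $X^q$ is a $D$-boundary of an expression that vanishes by the inductive hypothesis together with $D\rho = \rho D$ on the generators and on $\minim^{p,h}_\sipka$. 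Concretely, since $\minim_{\malesipky}$ is free as a graded operad on its generators, a derivation is determined by its values on generators, so it is enough to check $(D\rho - \rho D)$ vanishes on each generator; on $X_\B, X_\W, p, h$ it vanishes because $\alpha$ is a morphism and $D(h)=q-p$ translates the equation for $q$, and on $q, X^q$ it vanishes by construction.

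The main obstacle I anticipate is purely bookkeeping rather than conceptual: one must be careful that the recursion for $\rho(q_n)$ and $\rho(X^q_n)$ really only involves $\rho$ on strictly lower arity, i.e.\ that the cellular (= triangulated) structure of the minimal model $\minim$ and of $\minim_\sipka$ guarantees that $D$ of a generator of arity $n$ is a sum of composites each of whose factors has arity $< n$; this is where the Koszulness (equivalently, the fact that $\minim = (\calF(X), d)$ is a genuine minimal model with $d$ decomposable) is used. Granting that — which is exactly the content of \cite[Theorems~7 and~18]{ws01} that we are permitted to invoke — the induction closes, and the verification that $\rho$ extends $\alpha$ is immediate since on $\minim^{p,h}_\sipka$ the equations for the generators $\{X_\B; p,q,h; X^p; X_\W\}$ coincide with those defining $\alpha$. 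Thus $\rho$ is the desired lift, completing the proof.
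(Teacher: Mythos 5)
Your construction is correct in substance, but it takes a genuinely more hands-on route than the paper. The paper's proof is purely homotopy-theoretic: it observes that $\minim^{p,h}_\sipka$ is the coproduct of $\minim_\sipka$ with the acyclic dg operad $(\calF(p,q,h),D)$, so the projection $\minim^{p,h}_\sipka \to \minim_\sipka$ is a quism; since $\minim_{\malesipky} \to \minim_\sipka$ is a quism by \cite[Theorem~18]{ws01}, two-out-of-three makes $\iota$ a quism, and the lift then exists by standard obstruction theory for the acyclic cellular extension $\iota$. Your argument is the explicit unwinding of that obstruction theory: the freely adjoined cells come in acyclic pairs $(X^q_n,X^h_n)$ with $D(X^h_n)$ containing $X^q_n-X^p_n$ linearly plus terms built from generators of strictly lower arity, so one may choose $\rho(X^h_n)$ freely, solve for $\rho(X^q_n)$ from the chain-map condition on $X^h_n$, and deduce the chain-map condition on $X^q_n$ from $D^2(X^h_n)=0$ together with the inductive hypothesis --- exactly the mechanism of the ``miracle'' in Lemma~\ref{Vcera jsem ten kopec na Tocnou vyjel.}. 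What your version buys is constructiveness and independence from the quism statement of \cite[Theorem~18]{ws01}; what it costs is that you must invoke the precise triangular shape of $D$ on the new generators of $\minim_{\malesipky}$ (which is indeed how that operad is built in \cite{ws01}, so this is a legitimate input), whereas the abstract argument never needs it.

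Two inaccuracies should be fixed, though neither is fatal. First, $q$ and $h$ are already generators of $\minim^{p,h}_\sipka = (\calF(X_\B;p,q,h;X^p;X_\W),D)$, so $\rho(q)=\alpha(q)$ and $\rho(h)=\alpha(h)$ are forced from the start; the only generators freely adjoined by $\iota$ are $X^q$ and $X^h$, not $\{q,X^q,X^h\}$ as you state. Second, the equation that \emph{defines} $\rho(X^q_n)$ is $d_{\oA}\rho(X^h_n)=\rho(DX^h_n)$ --- it is $D(X^h_n)$, not $D(X^q_n)$, that contains $X^q_n$ linearly --- while $d_{\oA}\rho(X^q_n)=\rho(DX^q_n)$ is the identity that must be \emph{verified} afterwards via $D^2=0$. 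Your first paragraph reverses these two roles, although your second paragraph, where you write $\rho(q)=\rho(p)+D\rho(X^h)+(\text{lower terms})$, has them the right way around.
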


\begin{proof}
Notice that $\minim^{p,h}_\sipka$ is a
coproduct of $\minim_\sipka$
with the acyclic dg operad $(\calF(p,q,h),D)$, therefore 
the natural map $\minim^{p,h} \to
\minim_\sipka$ is a quism, and so is the composite
\[
\minim^{p,h} \stackrel\iota\hookrightarrow \minim_{\malesipky} 
\longrightarrow \minim_\sipka.
\]
Since the second map is
a quism by~\cite[Theorem~18]{ws01}, the inclusion $\iota$ is a quism
too.
The proof is finished by the standard obstruction theory using the fact that
$\minim_{\malesipky}$ is an acyclic cellular extension of $\minim^{p,h}_\sipka$.
\end{proof}

\noindent 
{\bf Conclusion.}
Proposition~\ref{Voskova panenka a certik Pik} with $\oA := \End_{A',A''}$ 
gives a {verbatim} generalization
of Lemma~\ref{Vcera jsem ten kopec na Tocnou vyjel.} and
Corollary~\ref{Bude dnes opravene kolo?} to $\oP_\infty$-algebras for
$\oP$ a quadratic Koszul operad 
over a field of characteristic $0$.

\subsection*{Transfers of \/$\oP_\infty$-structures.}
The ground ring  in this subsection will be a
field of characteristic~$0$, though in some situations  the main result
may still hold over an arbitrary commutative unital ring~$R$,
cf.~Remark~\ref{Kasparek s Myskou}. Let
$\oP$  be a quadratic Koszul operad and $\minim   = (\calF(X),d)$
its minimal model. 
The central role will be played by the two-colored
dg-suboperad $\minimS$ of the operad $\minimiso$
in Theorem~24 of~\cite{ws01} generated by 
$X_\B,X_W,f_0,f_1,g_0,X^{f_0},X^{f_1}$ and $X^{g_0}$.
Modifying the arguments in the proof of that theorem we can show that
$\minimS$ is a cellular resolution of the two-colored operad~$\oPS$
whose algebras consist of $\oP$-algebra structures on chain complexes
$A'$ and $A''$, a $\oP$-algebra morphism $f:A' \to A''$ and its
inverse $g : A'' \to A'$. 
Algebras for $\minimS$ therefore consist of $\oP_\infty$-structures on
$A'$ and $A''$, $\oP_\infty$-morphisms $F :A'
\to A''$ and $G: A'' \to A'$, and a $\oP_\infty$-homotopy $H$ between
$GF$ and $\id_{A'}$.

Let $\minimuS$  be the coproduct of the operad $\minim$
with the two-colored dg operad $(\calF(f,g,h),D)$, where $f: \B \to \W$, $g:
\W \to \B$ are degree $0$ generators and $h : \B \to \B$ a degree $1$
generator such that $gf=D(h)$. Algebras for
$\minimuS$ consist of a $\oP_\infty$-structure on 
$A'$, and chain maps $f:A' \to A''$, $g:A''
\to A'$ such that $h$ is a chain homotopy between $gf$ and
$\id_{A'}$. The operad $\minimuS$
resolves the operad
$\oPuS$ that describes structures consisting of a $\oP$-algebra on
$A'$ a chain map  $f:A' \to A''$ and its left inverse $g:A''
\to A'$.

As noticed in~\cite[Example~12]{haha}, 
there exists a morphism $\psi : \oPS \to
\oPuS$ such that $\psi(p) := p$ for $p \in \oP$ siting in the
\B-color, $\psi(f) := f$, $\psi(g) := g$ and
$\psi (p) := fp \otexp gn$  for $p \in \oP(n)$, $n \geq 2$,
  siting in the \W-color.  
By the standard properties of cellular
resolutions, the map $\psi$ lifts into a morphism $\widetilde \psi :
\minimS \to \minimuS$. Each  $\minimuS$-algebra is thus also an
$\minimS$-algebra. 

The {\bf conclusion} of this subsection is so important that we
formulate it as the following theorem where $\oP$ is a quadratic
Koszul operad over a field of characteristic $0$.

\begin{theorem}
\label{Alicek, Mikinka a Flicek}
Let $f : (A,\pa) \to (B,\pa)$ and  $g : (B,\pa) \to
(A,\pa)$ be chain maps and $h$ a chain homotopy
between $gf$ and $\id_A$.
Then any $\oP_\infty$-algebra structure $(A,\pa,\bfnu)$ on $(A,\pa)$ induces
a $\oP_\infty$-algebra structure $(B,\pa,\bfnu)$ on
$(B,\pa)$ such that the chain maps $f$ and $g$ extend to
$\oP_\infty$-morphisms $F$ and~$G$, and the chain homotopy $h$ extends to
a $\oP_\infty$-homotopy  between $GF$ and $\id_A$.
\end{theorem}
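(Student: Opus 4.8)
The plan is to read the theorem off from the two-colored operadic machinery assembled just above the statement, by recognizing the hypothesis as a tautological algebra structure and then pulling it back along the lift $\widetilde\psi$. First I would observe that the hypothesis of the theorem is nothing but a $\minimuS$-algebra structure on the two-colored chain complex $\{A,B\}$, with $A$ placed in the $\B$-color and $B$ in the $\W$-color: by the description recalled above, a $\minimuS$-algebra on $\{A,B\}$ is exactly a $\oP_\infty$-structure on $(A,\pa)$ together with chain maps $f:A\to B$ and $g:B\to A$ and a chain homotopy $h$ between $gf$ and $\id_A$, which is precisely the data we are given.

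The second step is to restrict this $\minimuS$-algebra structure along the lift $\widetilde\psi:\minimS\to\minimuS$, turning $\{A,B\}$ into a $\minimS$-algebra; recall that every $\minimuS$-algebra is an $\minimS$-algebra in this way. Unwinding the description of $\minimS$-algebras, the resulting structure consists of $\oP_\infty$-structures on $(A,\pa)$ and $(B,\pa)$, $\oP_\infty$-morphisms $F:A\to B$ and $G:B\to A$, and a $\oP_\infty$-homotopy between $GF$ and $\id_A$ --- exactly the output claimed. Here the transferred structure on $B$ and the higher components of $F$, $G$ and of the homotopy are the images under $\widetilde\psi$ of the generators $X_\W,X^{f_0},X^{g_0},X^{f_1}$ of $\minimS$.

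What still needs checking --- and what I expect to be the only real obstacle --- is that the data produced this way genuinely \emph{extends} the given $f$, $g$, $h$ and \emph{restricts} to the original $\bfnu$ on $A$, rather than to something merely homotopic to it. I would handle this by normalizing the lift $\widetilde\psi$ on the bottom cells of $\minimS$: since $\psi:\oPS\to\oPuS$ sends each $p\in\oP$ sitting in the $\B$-color to $p$, sends $f$ to $f$ and $g$ to $g$, while the positive-degree generators (in particular the homotopy generator $h$ of $\minimuS$) project trivially to the plain operad $\oPuS$, one may prescribe $\widetilde\psi$ to be the inclusion $\minim\hookrightarrow\minimuS$ on $X_\B$ and to send $f_0\mapsto f$, $g_0\mapsto g$, $f_1\mapsto h$; the usual cell-by-cell extension up the cellular filtration of $\minimS$ then only determines the images of the remaining generators. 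The step that makes the obstruction argument do real work is the verification that this bottom-cell prescription is compatible with the differentials of $\minimS$ and $\minimuS$ and with covering $\psi$; granting it, the restricted structure visibly agrees with the given one along $X_\B,f_0,g_0,f_1$, which is exactly the required extension property.

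Finally I would recall that the existence of $\widetilde\psi$ itself rests on $\minimuS\twoheadrightarrow\oPuS$ being a surjective quism and $\minimS$ being cofibrant --- the ``standard properties of cellular resolutions'' invoked above, together with the assertion, obtained by modifying the proof of \cite[Theorem~24]{ws01}, that $\minimS$ resolves $\oPS$ with the stated description of its algebras --- and that Remark~\ref{Kasparek s Myskou} handles relaxing the characteristic-zero hypothesis where possible.
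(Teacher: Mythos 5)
Your proposal follows the paper's own argument exactly: the theorem is stated there as the ``conclusion'' of the subsection on transfers, and its proof is precisely your first two steps --- viewing the hypothesis as a $\minimuS$-algebra on the two-colored complex $\{A,B\}$ and restricting along the lift $\widetilde\psi:\minimS\to\minimuS$ of $\psi:\oPS\to\oPuS$ supplied by the cellular-resolution machinery. Your additional care in normalizing $\widetilde\psi$ on the bottom cells ($X_\B$, $f_0$, $g_0$, $f_1$) so that the restricted structure genuinely \emph{extends} the given $f$, $g$, $h$ and the original $\oP_\infty$-structure on $A$ addresses a point the paper leaves implicit, and is a welcome refinement rather than a deviation.
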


\begin{remark}
\label{Kasparek s Myskou}
We believe that, for non-$\Sigma$ operads
satisfying the conditions of Definition~\ref{Uz to neni ono.},  
there exist explicit formulas for transfers of $\oP_\infty^\bbZ$-structures 
involving $\bbZ$-linear combinations of decorated trees, parallel to
the formulas in~\cite{tr}. 
This would imply that for such operads the analogs of  Theorem~\ref{Stal se ten
  zazrak?} and Corollary~\ref{hruba verze} hold
over an arbitrary unital commutative ring.
\end{remark}

\section*{Concluding remarks}

Section~\ref{Kralicek a Oslicek} hinges of Corollary~\ref{Bude dnes
  opravene kolo?} and Section~\ref{dva kasparci a certik Pik} moreover
uses  Theorem~\ref{Stal se ten zazrak?} and Lemma~\ref{Vcera jsem ten
  kopec na Tocnou vyjel.}. As we proved in Section~\ref{Medvidek s
  Micinkou}, the obvious analogs of these results hold for quadratic
Koszul operads over a field of characteristic $0$, and thus all the
other results of the paper hold too.
We strongly believe
that, for non-$\Sigma$-operads satisfying the condition of
Definition~\ref{Uz to neni ono.}, the results of this
article hold over an arbitrary commutative unital ring $R$.

\end{document}